\newfont{\cyrr}{wncyr10}
\newcommand{\N}{{\mathfrak N}}
\newcommand{\Z}{{\mathbb Z}}
\newcommand{\Q}{{\mathbb Q}}
\newcommand{\R}{{\mathbb R}}
\newcommand{\C}{{\mathbb C}}
\newcommand{\F}{{\mathbf F}}
\newcommand{\K}{{\mathbf K}}
\newcommand{\bN}{{\mathbb N}}
\newcommand{\f}{{\mathbf{f}}}
\newcommand{\kk}{{\mathbf{k}}}
\newcommand{\thmref}[1]{Theorem~\ref{#1}}
\newtheorem{thm}{Theorem}
\newtheorem*{thm*}{Theorem}
\newtheorem{lem}[thm]{Lemma}
\newtheorem{cor}[thm]{Corollary}
\newtheorem{rmk}{Remark}[section] 
\newtheorem*{defn}{Definition}
\newcommand{\lemref}[1]{Lemma~\ref{#1}}
\newcommand{\rmkref}[1]{Remark~\ref{#1}}
\newcommand{\m}{{\mathfrak m}}
\newcommand{\n}{{\mathfrak{n}}}
\newcommand{\cO}{\mathcal{O}}
\newcommand{\bH}{\mathbb{H}}
\newcommand{\ka}{\kappa}
\newcommand{\GL}{\text{GL}}
\newcommand{\tr}{{\rm tr}}
\begin{document}

\title[Quotients of derivatives of $L$-functions]
{On quotients of derivatives of $L$-functions
inside the critical strip}

\author{Rashi Lunia}

\address{Rashi Lunia  \\ \newline
	The Institute of Mathematical Sciences, 
	A CI of Homi Bhabha National Institute, 
	C.I.T Campus, Taramani, Chennai 600 113, India.}

\email{rashisl@imsc.res.in}

\subjclass[2010]{11F11, 11F37, 11F41, 11J86, 33B15}

\keywords{$L$-functions attached to modular forms, Holomorphic modular forms of integral and half-integral weight, Hilbert modular forms, Koecher Maass series, Linear forms in logarithms, Digamma functions}

\begin{abstract}
In 2011, Gun, Murty and Rath studied non-vanishing and transcendental nature
of special values of a varying class of $L$-functions and their derivatives. This led
to a number of works by several authors in different set-ups including studying higher
derivatives. However, all these works were focused around the central point of the
critical strip. In this article, we extend the study to arbitrary points in
the critical strip. 
\end{abstract}

\maketitle   

\section{\large{Introduction and Preliminaries}}
Special values of $L$-functions have been the focus of study since
the time of Euler.  In recent times, Gun, Murty and Rath \cite{GMR} studied 
non-vanishing of modular $L$-functions  along with their derivatives   
and deduced information about the algebraic nature of their 
values at the center of the critical strip. 

Tanabe \cite{NT} extended their result to Hilbert modular forms and 
later Kumar \cite{NK} proved similar results for half- integer weight modular forms. 
In 2016,  Murty and Tanabe \cite{MT} showed
 that vanishing of the derivative of certain Artin $L$-function at $1/2$ 
 is related to the transcendence of $e^{\gamma}$ (see also \cite{NeK}). 
However, all these works were focused around the central point of the
critical strip. 

In this article, we investigate derivatives of quotients of $L$-functions
at arbitrary points inside the critical strip. Further, we derive $\overline{\Q}$-linear 
independence results for such $L$-values. We also deduce transcendence 
degree of the field generated by these values. For results of different nature 
for rational points in the critical strip, see \cite{GMR2}. 

\subsection{Notations}
Throughout this paper, we say that a finite set of integers $S$ has 
property $\bf{A}$ if for every $n \in S$, there exists a prime $p$ such that 
$p | n$ but $p \nmid m$ for any other $m \in S$. Note that any finite subset of 
prime numbers satisfies property $\bf A$.
We shall use the notation $[x]$ to denote the greatest 
integer $n\leq x$.
We shall use $\Z_{\ge 1}$,  $\bN$ and $\Z_{\le 0}$ to denote the set of positive 
integers, non-negative integers and non-positive integers respectively.

\begin{defn}
The transcendence degree of a field $F$ over $\Q$ is the cardinality of a maximal 
algebraically independent subset of $F$ over $\Q$. 
\end{defn}
For a set $B$, we use $\tr_{\Q}(B)$ to denote the transcendence degree of the 
field $\Q(B)$ over $\Q$. 

\subsection{Transcendental Pre-requisites}
We now recall a few results from transcendental number theory.
\begin{thm*}[Lindemann \cite{FL}]
If $\alpha \neq 0, 1$ is an algebraic number, then $\log{\alpha}$ is transcendental. 
\end{thm*}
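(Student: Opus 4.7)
The plan is to derive Lindemann's theorem from the Hermite--Lindemann theorem, which asserts that $e^{\beta}$ is transcendental whenever $\beta$ is a nonzero algebraic number. Assuming Hermite--Lindemann, the deduction is short, so I would first set up the contrapositive argument and then spend the bulk of the work recalling the ingredients of Hermite--Lindemann itself.

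First I would argue by contradiction: suppose $\alpha \in \overline{\Q} \setminus \{0,1\}$ and that $\log \alpha$ is also algebraic. Since $\alpha \neq 1$, any branch of $\log \alpha$ is nonzero, so $\beta := \log \alpha$ is a nonzero algebraic number. Applying Hermite--Lindemann to $\beta$ yields that $e^{\beta}$ is transcendental. But $e^{\beta} = \alpha$ is algebraic by assumption, a contradiction. Hence $\log \alpha$ cannot be algebraic.

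It remains to justify Hermite--Lindemann, which is the substantive input. The standard approach is the Hermite integral / auxiliary function method: assume $\beta \neq 0$ algebraic and $e^\beta$ algebraic, let $\beta = \beta_1, \dots, \beta_n$ be the conjugates of $\beta$, and consider the symmetric expression $\prod_i (e^{\beta_i} - \text{(something algebraic)})$ to obtain a nonzero algebraic integer. One then constructs, for a large prime $p$, the auxiliary polynomial $f(x) = c^{np} x^{p-1} \prod_i (x-\beta_i)^p$ (with $c$ a common denominator), forms $F(x) = \sum_{k \ge 0} f^{(k)}(x)$, and exploits the identity $\frac{d}{dx}\bigl(e^{-x} F(x)\bigr) = -e^{-x} f(x)$ to write $e^{\beta_i} F(0) - F(\beta_i)$ as an integral of $f$ along $[0,\beta_i]$. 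Summing an appropriate algebraic combination over $i$ yields on one hand a nonzero rational integer divisible by $(p-1)!$ but not by $p!$ (for $p$ sufficiently large), and on the other hand a quantity bounded above by $C^p / (p-1)!$; for large $p$ this is a contradiction.

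The main obstacle is the delicate arithmetic bookkeeping in the auxiliary function step: one must choose the prime $p$ so that the $p$-adic valuation of the relevant sum is exactly $p-1$, which requires showing the $x=0$ term contributes a unit mod $p$ while all other terms vanish mod $p$, and simultaneously that the analytic upper bound grows slower than $(p-1)!$. Since the target theorem is classical and its proof occupies a standard textbook chapter, in the present paper I would simply cite Hermite--Lindemann (e.g.\ from Baker's \emph{Transcendental Number Theory}) and present only the one-line deduction above.
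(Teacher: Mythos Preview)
The paper does not prove this statement at all: it merely states Lindemann's theorem as a classical prerequisite with a citation to \cite{FL} and moves on. Your proposal, by contrast, actually sketches a proof via Hermite--Lindemann, which is the standard and correct route; the one-line contrapositive deduction you give is entirely valid, and your outline of the auxiliary-function argument for Hermite--Lindemann is accurate in spirit. So there is nothing to compare --- you have supplied strictly more than the paper does, and your own concluding instinct (to simply cite the result, e.g.\ from Baker's book) is exactly what the author did.
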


Throughout this paper, without loss of generality, we consider the principal 
branch of logarithm. In a seminal work on the theory of linear forms in logarithms, 
Alan Baker proved the following.
 
\begin{thm*}[Baker \cite{AB}]\label{Baker}
If $\alpha_1, \ldots, \alpha_n$ are non-zero algebraic numbers 
such that $\log \alpha_1, \ldots, \log \alpha_n$ are linearly independent over $\Q$, then 
$1, \log \alpha_1, \ldots, \log \alpha_n$ are linearly independent over $\overline{\Q}$.
\end{thm*}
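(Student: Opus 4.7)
The plan is to follow Baker's original 1966 method, which extends the Gelfond--Schneider technique from two logarithms to arbitrarily many. I would argue by contradiction: suppose there exist algebraic numbers $\beta_0, \beta_1, \ldots, \beta_n$, not all zero, satisfying
$$\beta_0 + \beta_1 \log \alpha_1 + \cdots + \beta_n \log \alpha_n = 0.$$
The $\Q$-linear independence hypothesis on $\log\alpha_1,\ldots,\log\alpha_n$ implies, after relabelling, that $\beta_n \neq 0$, so that $\log \alpha_n$ can be written as $\eta_0 + \sum_{j=1}^{n-1} \eta_j \log \alpha_j$ with $\eta_j \in \overline{\Q}$. This is the algebraic relation that the analytic machinery below will ultimately contradict.

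Next, I would construct, via Siegel's lemma, an auxiliary entire function $\Phi$ in several complex variables built from monomials in the $z_j$ and in the exponentials $\alpha_j^{(\lambda_j + \lambda_n \eta_j) z_j}$, with rational integer coefficients $p(\lambda_0, \ldots, \lambda_n)$ of controlled height, not all zero. The coefficients are chosen so that $\Phi$, together with all of its partial derivatives of total order up to some parameter $T$, vanishes at every integer point of a box of side length $h$. The parameters $L_0,\ldots,L_n, T, h$ are balanced so that the number of unknown coefficients outstrips the number of linear vanishing conditions, making Siegel's lemma applicable.

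The crux of the argument is the \emph{extrapolation step}: one shows, by induction on an enlarging sequence of boxes, that $\Phi$ in fact vanishes at many more grid points than it was designed to. This combines an analytic upper bound coming from the maximum modulus principle on a large polydisc with an arithmetic lower bound, namely that any nonzero algebraic number has absolute value at least an inverse power of its height and denominator. Iterating the extrapolation forces enough vanishing to conclude that all coefficients $p(\lambda_0, \ldots, \lambda_n)$ must be zero, contradicting the Siegel construction. The main obstacle, as in every Baker-type proof, is coordinating $L_j, T, h$ so that the analytic and arithmetic estimates remain compatible through every round of induction; this delicate balance, together with the passage from the single-variable Gelfond--Schneider setting to a genuinely multivariate auxiliary function, is the technical heart of the proof.
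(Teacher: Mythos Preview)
The paper does not prove this statement at all: Baker's theorem is listed among the ``Transcendental Pre-requisites'' and is simply quoted, with a citation to \cite{AB}, as an input to the paper's own arguments. There is therefore no proof in the paper to compare your proposal against.

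That said, your outline is a faithful high-level sketch of Baker's 1966 method (Siegel's lemma to build an auxiliary function, extrapolation via the interplay of the maximum modulus principle and the Liouville-type lower bound, and the final contradiction from over-vanishing). As a sketch it is correct in spirit, though of course the real work lies in the parameter choices and the inductive estimates you allude to. For the purposes of this paper, however, no such argument is needed or expected: the theorem is used as a black box.
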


\begin{subsection}{Polygamma function}
The gamma function is defined as 
$$ 
\Gamma (z)=\int_{0}^{\infty }t^{z-1}e^{-t}\,dt,\ \qquad z \in \C,  ~~\Re(z)>0,
$$
and it satisfies the functional equation $\Gamma(z+1)=z\Gamma(z)$. 
Let $\psi$ denote the digamma function which is the logarithmic 
derivative of the gamma function. For $ z \in \C\backslash \Z_{\leq 0}. $ we have
$$
-\psi(z)= \gamma + \frac{1}{z}+ \sum_{n=1}^{\infty}\left(\frac{1}{n+z}-\frac{1}{n}\right),
$$
and $\psi(1)=-\gamma$, where $\gamma$ is Euler's constant. When $x$ is 
real, $\psi(x)$ is strictly increasing on $(0, \infty)$.
The digamma function satisfies the following relations \cite{AS}:
\begin{enumerate}[label=(\roman*)]
\item Recurrence formula: $\psi(z+1)=\psi(z)+\frac{1}{z}.$\label{Rec}
\item Reflection principle: $\psi(1-z)=\psi(z)+\pi\cot{\pi z}.$
\item Duplication formula: $2\psi(2z)=\psi(z)+\psi(z+1/2)+2\log{2}$.
\end{enumerate}

The higher derivatives of the digamma function are known as 
polygamma functions. They are defined as 
\begin{equation}\label{zetagamma}
\psi^{(m)}(z)=(-1)^{m+1}m!\sum_{k=0}^{\infty}\frac{1}{(k+z)^{m+1}}
\end{equation}
and are holomorphic on $\C\backslash \Z_{\leq 0}.$
Analogous to the above, polygamma functions satisfy the following relations 
(see eqns 6.4.6 and 6.4.7 of \cite{AS}):  For $m \in \Z_{\ge 1}$,
 we have
\begin{enumerate}[label=(\roman*)]
\item 
Recurrence relation:
\begin{equation}\label{Gamma}
\psi^{(m)}(z+t)=\psi^{(m)}(z)+(-1)^mm!\sum_{j=0}^{t-1}\frac{1}{(z+j)^{m+1}}, 
\quad \text{for  }  t \in \Z_{ \ge 1}.  
\end{equation}
\item Reflection principle:
\begin{equation}\label{Reflection}
\psi^{(m)}(1-z)+(-1)^{m+1}\psi^{(m)}(z)=(-1)^m\pi\frac{d^m}{dz^m}(\cot{\pi z}).
\end{equation}
\item Duplication formula: 
\begin{equation}\label{Duplication}
 2^{m+1}\psi^{(m)}(2z)=\psi^{(m)}(z)+\psi^{(m)}(z+ 1/2).
\end{equation}
\end{enumerate}
where the last equation is obtained by taking $m$-th derivative 
of the duplication formula for digamma function.
Further, taking $z=1/2$ in \eqref{Duplication}, for $ m\ge 1$, we get
\begin{equation}\label{psi12}
 \psi^{(m)}(1/2)=(2^{m+1}-1)\psi^{(m)}(1).
\end{equation}

\begin{thm*}[Gun-Murty-Rath \cite{GMR3}]\label{dim}
There exists an integer $q_0>1$ such that for any integer 
$q$ coprime to $q_0$, the $\Q$-vector space spanned by the set
$$
\{\psi(a/q)| 1\leq a \leq q, (a,q)=1\}
$$
has dimension $\phi(q)$.
\end{thm*}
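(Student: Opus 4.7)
The plan is to pass from the given set to its Fourier transform on $(\Z/q\Z)^*$. Suppose we have a $\Q$-linear relation $\sum_{(a,q)=1} c_a \psi(a/q) = 0$, and for each Dirichlet character $\chi$ modulo $q$ set
\[
\widehat c(\chi) := \sum_{(a,q)=1} c_a\, \bar\chi(a) \in \overline{\Q}, \qquad T(\chi) := \sum_{(a,q)=1} \chi(a)\, \psi(a/q).
\]
Character orthogonality rewrites the hypothesis as $\sum_{\chi \bmod q} \widehat c(\chi)\, T(\chi) = 0$, and dually recovers each $c_a$ from the $\widehat c(\chi)$; hence it suffices to show that the $\phi(q)$ numbers $T(\chi)$ are $\overline{\Q}$-linearly independent.

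The next step is to evaluate the $T(\chi)$ explicitly. Expanding the Hurwitz zeta $\zeta(s, a/q)$ about $s=1$ shows that $T(\chi) = -q\, L(1,\chi)$ for every non-principal $\chi$. For the principal character $\chi_0$, substitute Gauss's digamma theorem
\[
\psi(a/q) = -\gamma - \log 2q - \tfrac{\pi}{2}\cot(\pi a/q) + 2\sum_{n=1}^{\lfloor (q-1)/2 \rfloor} \cos(2\pi n a/q) \log \sin(\pi n/q)
\]
into the definition of $T(\chi_0)$; the cotangent sum vanishes by the $a \leftrightarrow q-a$ symmetry, leaving $T(\chi_0) = -\phi(q)(\gamma + \log 2q) + 2\sum_{n} c_q(n) \log \sin(\pi n/q)$, with $c_q(n)$ the Ramanujan sum. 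The classical closed forms for $L(1,\chi)$ then express $T(\chi)$, for even non-principal $\chi$, as a $\overline{\Q}$-linear combination of the logarithms $\log|1-\zeta_q^n| = \log 2 + \log \sin(\pi n/q)$ of cyclotomic units, and for odd $\chi$ as $\pi$ times an algebraic number.

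At this point I would invoke Baker's theorem. Choose $q_0$ so that, for every $q$ coprime to $q_0$, the cyclotomic sines $\sin(\pi n/q)$ with $1 \le n \le (q-1)/2$ are multiplicatively independent modulo roots of unity; this excludes only finitely many small primes where exceptional relations among cyclotomic units intrude. Then Baker's theorem gives the $\overline{\Q}$-linear independence of $\{1\} \cup \{\log|1 - \zeta_q^n|\}$, while Lindemann's theorem, via the identity $\pi/i = \log(-1)$, separates the purely imaginary $\pi$-contributions coming from odd characters from the real log-contributions coming from even characters; this yields the $\overline{\Q}$-linear independence of $\{T(\chi) : \chi \neq \chi_0\}$. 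The main obstacle is $T(\chi_0)$, which genuinely carries a $\gamma$. To finish, one must show that the coefficient of $\gamma$ in any alleged $\overline{\Q}$-linear relation $\sum_\chi \widehat c(\chi) T(\chi) = 0$ forces $\widehat c(\chi_0) = 0$, reducing to the case already handled. This reduction, exploiting that $\widehat c(\chi_0) = \sum_a c_a$ and that the $\gamma$-term has no companion among the other $T(\chi)$, is the delicate point where the precise choice of $q_0$ intervenes and where I expect the bulk of the care to be required.
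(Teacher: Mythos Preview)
This theorem is quoted from \cite{GMR3} without proof; the present paper uses it only as a black box (to feed the remark immediately following it). There is therefore no argument here to compare yours against, so I comment on your proposal on its own terms.

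The Fourier reduction and the identity $T(\chi)=-qL(1,\chi)$ for $\chi\neq\chi_0$ are correct and are the natural opening moves. But the sentence ``this yields the $\overline{\Q}$-linear independence of $\{T(\chi):\chi\neq\chi_0\}$'' is false as soon as $q$ admits two odd characters, and this is a genuine gap. For every odd $\chi$ the classical formula gives $L(1,\chi)=\pi\alpha_\chi$ with $\alpha_\chi\in\overline{\Q}^{\times}$ (Gauss sum times a generalized Bernoulli number), so $\alpha_{\chi_2}T(\chi_1)-\alpha_{\chi_1}T(\chi_2)=0$ is already a nontrivial $\overline{\Q}$-relation. A parallel obstruction lurks among the even characters: each even $T(\chi)$ lies in the $\overline{\Q}$-span of the single fixed list $\{\log|1-\zeta_q^n|\}$, and Baker's theorem applied to that list says nothing about $\overline{\Q}$-independence of prescribed $\overline{\Q}$-combinations of its members. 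Thus the sufficient condition you isolated (``$\overline{\Q}$-independence of the $T(\chi)$'') is simply not true, and the argument as written does not close.

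What repairs it is the constraint you discarded at the outset: the coefficients $c_a$ are \emph{rational}, so the transform obeys $\widehat c(\chi^{\sigma})=\sigma\bigl(\widehat c(\chi)\bigr)$ for every $\sigma\in\mathrm{Gal}(\overline{\Q}/\Q)$. Summing over each Galois orbit of characters collapses the odd-character block to $\pi$ times a \emph{rational} number and the even-character block to a \emph{rational} linear form in logarithms of real cyclotomic units; it is to this rational form, together with $i\pi=\log(-1)$, that Baker's theorem actually applies, and the multiplicative-independence hypothesis on those units is exactly what the exceptional modulus $q_0$ encodes. Your instinct that the $\gamma$-term is the delicate residual point is correct, and its handling in \cite{GMR3} is where the existential ``there exists $q_0$'' genuinely earns its keep; but that step too rests on the rationality/Galois bookkeeping above rather than on any transcendence property of $\gamma$ itself, which remains open.
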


\begin{rmk}\label{dimension}
It follows from the above theorem that for all integers $q\geq 2$ coprime to $q_0$, 
the $\Q$-vector space spanned by the set 
$$
\left\{\psi\left(\frac{a}{q}\right)+\psi\left(1-\frac{a}{q}\right)\Bigg|~ 
1\leq a \leq q, (a,q)=1\right\}
$$
has dimension $\phi(q)/2$.
\end{rmk}

\end{subsection}

\begin{subsection}{An elementary lemma}
For the sake of completion, we prove the following lemma.
\begin{lem}\label{LI}
Let $V$ be a vector space over a field $\K$ and $\{v_1, \ldots v_n\}$ be a
 linearly independent set in $V$. Then for any $w \in V$ and $r_i \in \K$, the set 
 $\{v_1-r_1w, \ldots, v_n-r_nw\}$ spans a subspace of $V$ of dimension at least $n-1$.
\end{lem}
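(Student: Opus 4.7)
The plan is to bound the kernel of the obvious linear evaluation map and conclude from the rank--nullity theorem. Specifically, consider the map $T \colon \K^n \to V$ defined by
$$
T(c_1, \ldots, c_n) \;=\; \sum_{i=1}^n c_i (v_i - r_i w).
$$
Since the image of $T$ is exactly the span of $\{v_1-r_1w, \ldots, v_n-r_n w\}$, it suffices to show $\dim \ker T \le 1$, which will give $\dim \mathrm{Im}(T) \ge n-1$.

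So suppose $(c_1,\ldots,c_n)$ and $(c_1',\ldots,c_n')$ are two nonzero elements of $\ker T$. Rewriting $T(c_1,\ldots,c_n)=0$ as
$$
\sum_{i=1}^n c_i v_i \;=\; \Bigl(\sum_{i=1}^n c_i r_i\Bigr) w,
$$
we see that if $\sum_i c_i r_i = 0$, the linear independence of $\{v_1,\ldots,v_n\}$ forces all $c_i=0$, contradicting the hypothesis that the tuple is nonzero. Hence $\lambda := \sum_i c_i r_i \ne 0$, and similarly $\lambda' := \sum_i c_i' r_i \ne 0$. Both relations then express $w$ as an element of $\mathrm{span}\{v_1,\ldots,v_n\}$, namely $w = \lambda^{-1} \sum_i c_i v_i = \lambda'^{-1} \sum_i c_i' v_i$, so
$$
\sum_{i=1}^n (\lambda' c_i - \lambda c_i') v_i \;=\; 0.
$$
By linear independence, $\lambda' c_i = \lambda c_i'$ for every $i$, meaning the two tuples are scalar multiples of one another.

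Hence $\ker T$ is at most one-dimensional and the conclusion follows. There is no serious obstacle; the only step that requires a moment of thought is the observation that the coefficient $\sum_i c_i r_i$ of $w$ in any nontrivial kernel element must be nonzero, which is what pins the kernel down to (at most) a single line.
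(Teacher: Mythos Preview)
Your proof is correct and takes a slightly different route from the paper's. The paper case-splits on whether $\{v_1,\ldots,v_n,w\}$ is linearly independent; when it is not (so $w=\sum_j \beta_j v_j$ with, after relabeling, $\beta_1\ne 0$), it directly verifies that the subset $\{v_2-r_2w,\ldots,v_n-r_nw\}$ is linearly independent by substituting for $w$ and reading off the coefficient of $v_1$. You instead bound the kernel of the evaluation map $T$ and invoke rank--nullity, showing that any two nonzero relations among the $v_i-r_iw$ are proportional. Your approach avoids the explicit case split and the WLOG relabeling, and handles the situation $w\notin\mathrm{span}\{v_1,\ldots,v_n\}$ uniformly (the kernel is then automatically zero); the paper's argument has the minor advantage of explicitly naming which $n-1$ of the vectors form an independent set.
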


\begin{proof}
If the set $\{v_1, v_2, \ldots, v_n, w\}$ is linearly independent over $\K$, 
then the result is trivially true. Without loss of generality, let
$w=\sum_{i=1}^n\beta_iv_i$, for some $\beta_i \in \K$ with $\beta_1 \neq 0$.
We shall show that the set  $\{v_2-r_2 w, \ldots, v_n-r_n w\}$ is 
linearly independent over $\K$ for any $r_i \in \K$.
Let $\alpha_i \in \K$ be such that 
$$
0=\sum_{i=2}^n\alpha_i(v_i-r_iw)=
\sum_{i=2}^n\alpha_iv_i-\sum_{i=2}^n\alpha_ir_i\left(\sum_{j=1}^n\beta_jv_j\right).
$$
Since $\beta_1 \neq 0$ and $\{v_1, v_2, \ldots, v_n\}$ is linearly independent over 
$\K$, we get the result.
\end{proof}
\end{subsection}

\section{\large{ Automorphic $L$- functions}}
Let $d \geq 1$ be an integer and $f$ be a cuspidal automorphic form on $\GL(d)$
which is a Hecke eigenform with eigen values
$\lambda_f(n)$. Let  
$$
 L(f,s)=\sum_{n\geq 1}\frac{\lambda_f(n)}{n^s}, \qquad \Re(s)>1,
 $$ 
be the automorphic $L$-function of degree $d$ associated to $f$. 
The completed $L$-function associated to $f$ is defined as 
\begin{equation}\label{CLFGLD}
\Lambda(f, s)= N^{s/2}\pi^{-ds/2}\prod_{j=1}^{d}\Gamma\left(\frac{s+\ka_j}{2}\right)L(f, s),
\end{equation} 
where the integer $N\geq 1$ is the conductor of $L(f, s)$ and the  
complex numbers $\ka_j$ are the local parameters of $f$ at infinity 
with $\Re(\ka_j)>-1$. Luo, Rudnick and Sarnak \cite{LRS} showed 
that for any automorphic cusp form, $\Re(\ka_j)>-c$ where $c= \frac{1}{2}-\frac{1}{d^2+1}$.
The completed $L$-function $\Lambda(f,s)$ admits a meromorphic 
continuation to $\C$ with atmost  poles at $s=0$ and $s=1$ and it 
satisfies the functional equation 
\begin{equation}\label{FunGLD}
\Lambda(f,s)= \epsilon(f)\Lambda(\overline{f},1-s)
\end{equation}
where $\overline{f}$ is the dual of $f$ and $\epsilon(f)$ is a complex 
number with absolute value $1$, called the root number of $f$.
If $\overline{f}=f$, then $f$ is said to be self-dual. In this case, 
$\lambda_f(n) \in \R$ and $\epsilon(f)=\pm 1$. 

For integers $d,N \geq 1$ and a point $s_0\in \C$ with $\Re(s_0)\in (0,1)$, 
we define $E(d,\ka,N,s_0)$ to be the set of all cuspidal, self-dual 
automorphic Hecke eigenforms $f$ of degree $d$ and conductor $N$ such that 
their local parameters at infinity are $\ka=\{\ka_1, \ldots, \ka_d\}$ 
and $L(f,s_0)\neq 0$. In this set-up, we have the following results.

\begin{thm}
Let $s_0\in \C$. Then at most one element of the set
$$
\left\{\frac{L'(f,s_0)}{L(f,s_0)}+\frac{L'(f,1-s_0)}{L(f,1-s_0)} 
~\Big|~ 
f \in E(d,\ka,N,s_0), N \geq 1\right\}
$$
is algebraic. Furthermore, if $s_0$ and $\ka_j$ are real and satisfy
$$
s_0+\ka_j \ge 1 \quad {\rm and } \quad 1-s_0+\ka_j \ge 1 \quad {\rm for~ all~ }j
$$
and $N^{1/d}>4\pi e^{\gamma}$, we have
$$
\frac{L'(f,s_0)}{L(f,s_0)}+\frac{L'(f,1-s_0)}{L(f,1-s_0)} \neq 0.
$$
\end{thm}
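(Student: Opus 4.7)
The strategy is to convert both assertions into concrete statements about a closed-form expression for the sum of logarithmic derivatives. Since $f$ is self-dual, the functional equation \eqref{FunGLD} becomes $\Lambda(f,s)=\pm\Lambda(f,1-s)$, so differentiating in $s$ and taking ratios gives
$$
\frac{\Lambda'(f,s)}{\Lambda(f,s)}+\frac{\Lambda'(f,1-s)}{\Lambda(f,1-s)}=0.
$$
Expanding each term via the logarithmic derivative of \eqref{CLFGLD} and specialising at $s=s_0$ yields the identity
$$
\frac{L'(f,s_0)}{L(f,s_0)}+\frac{L'(f,1-s_0)}{L(f,1-s_0)}=-\log N + d\log\pi -\frac12\sum_{j=1}^{d}\left[\psi\!\left(\frac{s_0+\ka_j}{2}\right)+\psi\!\left(\frac{1-s_0+\ka_j}{2}\right)\right].
$$
The hypothesis $\re(\ka_j)>-1$ together with $\re(s_0)\in(0,1)$ keeps the $\Gamma$-factors finite and nonzero at both points, and $L(f,1-s_0)\ne 0$ follows from $L(f,s_0)\ne 0$ via the functional equation, so every quantity above is well-defined.

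For the first assertion, note that the right-hand side of the identity depends on $f$ only through the conductor $N$; in particular, all elements of $E(d,\ka,N,s_0)$ for a common $N$ produce the same value. If two distinct conductors $N_1\ne N_2$ gave algebraic values, the difference of those values would equal $\log(N_2/N_1)$ and would therefore be algebraic. But $N_2/N_1$ is a positive rational different from $1$, so the theorem of Lindemann quoted in the preliminaries forces $\log(N_2/N_1)$ to be transcendental, a contradiction.

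For the second assertion, under the real hypotheses $s_0+\ka_j\ge 1$ and $1-s_0+\ka_j\ge 1$, every argument of $\psi$ appearing in the identity lies in $[1/2,\infty)$. Since $\psi$ is strictly increasing on $(0,\infty)$ with $\psi(1/2)=-\gamma-2\log 2$ (which follows from \eqref{Duplication} at $z=1/2$), each of the $2d$ digamma values is bounded below by $-\gamma-\log 4$, giving
$$
\tfrac12\sum_{j=1}^{d}\left[\psi\!\left(\tfrac{s_0+\ka_j}{2}\right)+\psi\!\left(\tfrac{1-s_0+\ka_j}{2}\right)\right]\ \ge\ -d(\gamma+\log 4).
$$
Combining with $\log N > d\log(4\pi e^{\gamma}) = d(\log 4+\log\pi+\gamma)$ then gives
$$
\frac{L'(f,s_0)}{L(f,s_0)}+\frac{L'(f,1-s_0)}{L(f,1-s_0)}\ \le\ -\log N + d\log(4\pi e^{\gamma})\ <\ 0,
$$
so the quantity is strictly negative and in particular nonzero.

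The only step that demands some care is the derivation of the closed-form identity, together with the verification that both $L(f,s_0)$ and $L(f,1-s_0)$ are nonzero so that logarithmic differentiation is legitimate; once this is in hand, the first part reduces to a one-line application of Lindemann and the second to a direct monotonicity bound for $\psi$.
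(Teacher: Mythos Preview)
Your argument is essentially identical to the paper's: both derive the same closed-form identity \eqref{glds0} by logarithmic differentiation of the functional equation, then apply Lindemann to $\log(N_2/N_1)$ for the first assertion and the monotonicity of $\psi$ together with $\psi(1/2)=-\gamma-2\log 2$ for the second. Your version is in fact slightly more careful, since you justify why $L(f,1-s_0)\ne 0$ and use the correct non-strict inequality $\psi\big(\tfrac{s_0+\ka_j}{2}\big)\ge\psi(1/2)$ (the paper writes a strict inequality there, though the conclusion is unaffected because the strictness ultimately comes from $N^{1/d}>4\pi e^{\gamma}$).
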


\begin{proof}
Let $f \in E(d,\ka,N,s_0)$. From \eqref{CLFGLD} and \eqref{FunGLD} we have 
\begin{equation*}
N^{s/2}\pi^{-ds/2}\prod_{j=1}^{d}\Gamma\left(\frac{s+\ka_j}{2}\right)L(f,s)
=
\epsilon(f)
N^{(1-s)/2}\pi^{-d(1-s)/2}\prod_{j=1}^{d}\Gamma\left(\frac{(1-s)+\ka_j}{2}\right)L(f,1-s).
\end{equation*}
Taking the logarithmic derivative, we get 
\begin{equation*}
\log{N} + \frac{1}{2}\sum_{j=1}^d\psi\left(\frac{s+\ka_j}{2}\right)
+\frac{L'(f,s)}{L(f,s)}=d\log{\pi}
-\frac{1}{2}\sum_{j=1}^d\psi\left(\frac{1-s+\ka_j}{2}\right)
-\frac{L'(f,1-s)}{L(f,1-s)}.
\end{equation*}
Since by assumption, $L(f,s_0)\neq 0$, putting $s=s_0$ in the above equation, we get
\begin{equation}\label{glds0}
\frac{L'(f,s_0)}{L(f,s_0)} +\frac{L'(f,1-s_0)}{L(f,1-s_0)}
=-\frac{1}{2}\sum_{j=1}^d\left[\psi\left(\frac{s_0+\ka_j}{2}\right)
+\psi\left(\frac{1-s_0+\ka_j}{2}\right)\right]+d\log{\pi}-\log{N}.
\end{equation}
Let $f_1 \in E(d,\ka,N_1,s_0)$ and $f_2\in E(d,\ka,N_2,s_0)$ be such 
that their corresponding values are algebraic. By \eqref{glds0}, 
it is enough to consider the case when $N_1 \neq N_2$. In this case, 
$$
\frac{L'(f_1,s_0)}{L(f_1,s_0)}+\frac{L'(f_1,1-s_0)}{L(f,1-s_0)}
- \frac{L'(f_2,s_0)}{L(f_2,s_0)}-\frac{L'(f_2,1-s_0)}{L(f_2,1-s_0)} = \log\frac{N_2}{N_1}
$$
is algebraic. This gives a contradiction by Lindemann's theorem,
and we get the first part of the result.
For $s_0$ and $\ka_j$ satisfying the conditions in the second part
of the theorem, using the fact that $\psi(x)$ is strictly increasing
on $(0,\infty)$, we get
$$
-\psi\left(\frac{s_0+\ka_j}{2}\right)< 
-\psi\left(\frac12 \right) 
\quad {\rm and } \quad
-\psi\left(\frac{1-s_0+\ka_j}{2}\right)< 
-\psi\left(\frac12 \right).
$$
Therefore  using \eqref{glds0} and putting $-\psi(1/2)=\gamma+2\log{2}$, we get 
$$
\frac{L'(f,s_0)}{L(f,s_0)}+\frac{L'(f,1-s_0)}{L(f,1-s_0)} < 2d\log{2}
+d\gamma +d\log{\pi}-\log{N}<0
$$
whenever $N^{1/d}>4\pi e^{\gamma}$.
\end{proof}

In particular, we have

\begin{cor} 
Atmost one element of the set
$$
\left\{ \frac{L'(f,\frac12)}{L(f,\frac12)} 
~\Big|~
 f\in E\left(d,\ka,N,\frac12\right), N \geq 1 \right\}
$$
is algebraic. Furthermore, if $\ka_j$ is real with $\ka_j \ge \frac{1}{2}$,
for all $j$ and $N^{1/d}>4\pi e^{\gamma}$ then $L'(f,\frac{1}{2})\neq 0$.
\end{cor}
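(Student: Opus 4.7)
The plan is to obtain the corollary as a direct specialization of the preceding theorem to the point $s_0 = 1/2$. The key observation is that at this specific point, $1-s_0 = s_0$, so the two summands in the theorem coincide and
$$
\frac{L'(f,s_0)}{L(f,s_0)}+\frac{L'(f,1-s_0)}{L(f,1-s_0)} \;=\; 2\,\frac{L'(f,1/2)}{L(f,1/2)}.
$$
Thus the set appearing in the theorem is just twice the set appearing in the corollary.

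For the first assertion, I would note that multiplication by $2$ preserves algebraicity: an element of the corollary's set is algebraic if and only if the corresponding (doubled) element of the theorem's set is algebraic. So the statement that at most one element is algebraic transfers verbatim from the theorem.

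For the second assertion, I would check that the hypotheses line up. The condition $s_0 + \kappa_j \geq 1$ with $s_0 = 1/2$ becomes $\kappa_j \geq 1/2$, which is exactly the standing assumption; and by symmetry $1 - s_0 + \kappa_j \geq 1$ reduces to the same condition. The assumption $N^{1/d} > 4\pi e^{\gamma}$ is also carried over. The theorem then yields
$$
2\,\frac{L'(f,1/2)}{L(f,1/2)} \;\neq\; 0.
$$
Since $f \in E(d,\kappa,N,1/2)$ by definition requires $L(f,1/2) \neq 0$, this is equivalent to $L'(f,1/2) \neq 0$.

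There is no genuine obstacle in this deduction; it is entirely a matter of verifying that the hypotheses specialize correctly and that $\overline{\Q}$ is closed under multiplication by $2$. The only place that could invite a slip is forgetting to invoke $L(f,1/2) \neq 0$ (which lives in the very definition of $E(d,\kappa,N,1/2)$) when passing from non-vanishing of the logarithmic derivative to non-vanishing of $L'(f,1/2)$ itself.
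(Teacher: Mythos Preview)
Your proposal is correct and matches the paper's intent exactly: the corollary is stated immediately after the theorem with the words ``In particular, we have'' and no separate proof, so the paper is treating it precisely as the $s_0=1/2$ specialization you describe. Your verification that the hypotheses collapse to $\kappa_j\ge 1/2$ and that the factor of $2$ is harmless is all that is needed.
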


\begin{thm}
Let $s_0\in \C$ and let $J$ be a finite set of natural numbers $N\geq 1$ such that 
$E(d,\ka,N, s_0)$ is non-empty. Further, assume that $J$ 
has property $\bf{A}$. Then the set
$$
\left\{ \frac{L'(f,s_0)}{L(f,s_0)} +\frac{L'(f,1-s_0)}{L(f,1-s_0)}
~\Big|~
 f\in E(d,\ka,N,s_0), N \in J \right\}
$$
spans a vector space over $\overline{\Q}$ of dimension at least $|J|-1$.
\end{thm}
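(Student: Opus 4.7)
The plan is to leverage the explicit formula \eqref{glds0} derived in the previous theorem, which expresses each element of our set in an especially simple form, and then combine Baker's theorem with the elementary \lemref{LI}.

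First, I would observe that from \eqref{glds0}, for any $f\in E(d,\ka,N,s_0)$,
\[
\frac{L'(f,s_0)}{L(f,s_0)}+\frac{L'(f,1-s_0)}{L(f,1-s_0)} \;=\; C \,-\, \log N,
\]
where
\[
C \;=\; -\frac{1}{2}\sum_{j=1}^d\left[\psi\!\left(\frac{s_0+\ka_j}{2}\right)+\psi\!\left(\frac{1-s_0+\ka_j}{2}\right)\right]+d\log\pi
\]
depends only on $d$, $s_0$ and $\ka$, and is in particular independent of $N$ and of the particular choice of $f$. Thus the set we wish to study reduces to $\{\,C-\log N \mid N\in J\,\}$, and it suffices to show this set spans a subspace of dimension at least $|J|-1$ over $\overline{\Q}$.

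Next I would check that $\{\log N : N\in J\}$ is linearly independent over $\overline{\Q}$. Write $J=\{N_1,\ldots,N_k\}$ and suppose $\sum_i q_i \log N_i = 0$ with $q_i\in\Q$. Clearing denominators gives a relation $\prod_i N_i^{a_i}=1$ with $a_i\in\Z$. By property $\mathbf{A}$, for each $i$ there is a prime $p_i$ dividing $N_i$ but no $N_j$ with $j\neq i$; comparing $p_i$-adic valuations forces $a_i=0$. Hence $\log N_1,\ldots,\log N_k$ are linearly independent over $\Q$, and Baker's theorem then upgrades this to linear independence over $\overline{\Q}$.

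Finally, I would apply \lemref{LI} with $\K=\overline{\Q}$, $v_i = -\log N_i$, $w=1$ and $r_i = -C$: the hypotheses are met since the $v_i$ are linearly independent over $\overline{\Q}$ by the previous step, and the lemma's conclusion gives exactly that $\{v_i - r_i w\}_{i=1}^{k} = \{C-\log N_i\}_{i=1}^{k}$ spans a subspace of dimension at least $k-1 = |J|-1$, as required.

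The real content of the argument lies in the second step, where property $\mathbf{A}$ is exactly what guarantees $\Q$-linear independence of the $\log N$'s so that Baker's theorem becomes applicable; the remaining steps are essentially a clean packaging of \eqref{glds0} and \lemref{LI}. No subtlety arises in verifying that the value $C-\log N$ is the same for every $f \in E(d,\ka,N,s_0)$, since the right-hand side of \eqref{glds0} depends only on the local parameters $\ka_j$ and on $N$.
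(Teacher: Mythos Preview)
Your proof is essentially the same as the paper's: use \eqref{glds0} to reduce to $\{C-\log N : N\in J\}$, invoke property $\mathbf{A}$ and Baker to get $\overline\Q$-linear independence of $\{\log N\}$, then apply \lemref{LI}. There is one small slip in how you invoke \lemref{LI}: the lemma requires the scalars $r_i$ to lie in the field $\K=\overline\Q$, but your choice $r_i=-C$ need not be algebraic (it involves $\log\pi$ and digamma values). The fix is immediate and is exactly what the paper does: take $w=C$ (the whole constant, viewed as a vector in the $\overline\Q$-vector space $\C$) and $r_i=-1\in\overline\Q$, so that $v_i-r_iw=-\log N_i+C$ as desired. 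With that correction your argument matches the paper's proof.
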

\begin{proof}
Since the set $J$ has property $\bf{A}$, the set $\{\log{N}\big| N \in J\}$ 
is linearly independent over $\Q$.
Hence by Baker's theorem, it is a linearly independent 
set over $\overline{\Q}$ of cardinality $|J|$. Taking 
$$
w
= - \frac{1}{2}\sum_{j=1}^d\left[\psi\left(\frac{s_0+\ka_j}{2}\right)
+\psi\left(\frac{1-s_0+\ka_j}{2}\right)\right] + d\log{\pi},
$$
and using \eqref{glds0} and \lemref{LI}, we get the desired result.
\end{proof}

\section{\large{Integer and Half-integer weight Modular Forms}}
Let $N \geq 1$ be an integer and $0 \neq k\in \frac{1}{2}\mathbb{N}$. 
Let $f$ be a cusp form of weight $k$ for $\Gamma_0(N)$ (when $k$ 
is not an integer, we assume that $N\equiv 0 \pmod{4}$) with trivial 
nebentypus having Fourier expansion at infinity
$$
f(z)= \sum_{n=1}^{\infty}a_f(n)q^n.
$$
Then the $L$-series attached to $f$ is defined by
$$
L(f, s)= \sum_{n=1}^{\infty}\frac{a_f(n)}{n^s}
$$
which converges  for $\Re(s)\gg 1$. 
Throughout this section, $D$ will denote a fundamental discriminant. 
For $(D,N)=1$, 
the $L$-series of $f$ twisted with the quadratic character associated 
to $D$ is given by 
$$
L(f, D, s) = \sum_{n=1}^{\infty}\left(\frac{D}{n}\right)\frac{a_f(n)}{n^s}
$$
which converges for $\Re(s)\gg 1$ and has a holomorphic 
continuation to $\C$. Consider the completed $L$-function
\begin{equation}\label{FunEMF1}
\Lambda(f, D, s)=(2\pi)^{-s}(ND^2)^{s/2}\Gamma(s)L(f,D,s).
\end{equation}
Then $\Lambda(f,D,s)$ has a meromorphic continuation to $\C$ and satisfies 
the functional equation 
\begin{equation}\label{FunEMF}
\Lambda(f,D,s)=i^k\left(\frac{D}{-N}\right)\Lambda(f|_k{W_N},D,k-s)
\end{equation} 
where
$f|_k{ W_N}(z)= N^{-k/2}z^{-k}f(-1/Nz)$ (see Section 1.5 of \cite{DB} and \cite{GS1}).

For $s_0 \in \C$, let $E(N,D,k, s_0) $ be the set of all normalized cuspidal Hecke 
eigenforms $f$ of weight $k$ and level $N$ such that $L(f,D,s_0) \neq 0$. 
For integers $k, N \ge 1$, it is known that
there are infinitely many fundamental discriminants $D$ such that $(D,N)=1$ 
and $E(N, D, k, k/2)$ is non-empty (see \cite{BFH, MM, JW1, JW2}). 
On the other hand, fixing a fundamental discriminant  $D$, there are
infinitely many integers $N$ such that $E(N,D,k, k/2)$ is non-empty (see \cite{AA, WD, KM, JV} ). 

Moreover, it is known by the works of several authors \cite{WK, AR, RS, MS}
that for $k \gg 1$ and $D$ a fundamental discriminant, there are infinitely 
many integers $N$ such that $\cap_{s_0\in S}E(N,D,k, s_0)$ is non-empty for certain 
sets $S$ consisting of points in the critical strip.
In this set-up, we have the following results.

\begin{thm}
Let $s_0\in \C$ and $k \geq 1$. Then at most one element of the set
$$
\left\{\frac{L'(f,D,s_0)}{L(f,D,s_0)}+
\frac{L'(f,D,k-s_0)}{L(f,D,k-s_0)}
~\Big|~ 
 f \in E(N,D,k,s_0), ~ND^2\ge 1 {\rm ~with ~} (N,D)=1 \right\}
$$
is algebraic. Furthermore, for $k \geq 3$ and $s_0 = k/2 + a/b \in \Q$ with 
$0 \leq a/b \leq k/2-1$ if $ND^2 >  4\pi^2e^{2\gamma}$
then $f \in  E(N,D,k, s_0)$, implies
$$
\frac{L'(f,D,s_0)}{L(f,D,s_0)}+\frac{L'(f,D,k-s_0)}{L(f,D,k-s_0)}\neq 0.
$$
\end{thm}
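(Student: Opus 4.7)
The plan is to mirror the proof of the previous theorem (for automorphic $L$-functions on $\GL(d)$), replacing the gamma factors and functional equation of that setting with those coming from \eqref{FunEMF1} and \eqref{FunEMF}. The heart of the argument is to extract, by logarithmic differentiation of the functional equation, an explicit formula for the quantity
$$
\frac{L'(f,D,s_0)}{L(f,D,s_0)}+\frac{L'(f,D,k-s_0)}{L(f,D,k-s_0)}
$$
in terms of $\log(2\pi)$, $\log(ND^2)$ and values of the digamma function at $s_0$ and $k-s_0$.

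Concretely, first I would note that for a normalized Hecke eigenform $f$, the Atkin--Lehner involution acts as $f|_k W_N=\lambda(f)\cdot f$ for some nonzero scalar $\lambda(f)$, so $L(f|_k W_N,D,s)$ and $L(f,D,s)$ differ by a constant and therefore have the same logarithmic derivative. Next, $L(f,D,s_0)\neq 0$ together with the non-vanishing of $\Gamma$ and \eqref{FunEMF} forces $L(f,D,k-s_0)\neq 0$, so the quotient appearing in the theorem is well defined. Taking $\log$ of both sides of \eqref{FunEMF1}--\eqref{FunEMF} and differentiating at $s=s_0$ then yields
\begin{equation*}
\frac{L'(f,D,s_0)}{L(f,D,s_0)}+\frac{L'(f,D,k-s_0)}{L(f,D,k-s_0)}
= 2\log(2\pi)-\log(ND^2)-\psi(s_0)-\psi(k-s_0),
\end{equation*}
which is the analogue of \eqref{glds0} in this setup.

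For the first assertion, suppose two distinct pairs $(N_1,D_1)$ and $(N_2,D_2)$ produced algebraic values. Subtracting the two resulting identities, the $\psi$-terms and $\log(2\pi)$-terms cancel, leaving
$$
\log\!\left(\frac{N_2 D_2^2}{N_1 D_1^2}\right)\in\overline{\Q}.
$$
If $N_1 D_1^2\neq N_2 D_2^2$, this contradicts Lindemann's theorem (the argument of the logarithm is a positive rational different from $1$); if $N_1 D_1^2=N_2 D_2^2$, the two values coincide, so only one distinct algebraic value can occur.

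For the second assertion, write $s_0=k/2+a/b$ with $0\le a/b\le k/2-1$, so that $s_0\in[k/2,k-1]$ and $k-s_0\in[1,k/2]$. For $k\geq 3$ we have $s_0\ge k/2>1$, while $k-s_0\ge 1$, so by strict monotonicity of $\psi$ on $(0,\infty)$ and $\psi(1)=-\gamma$,
$$
\psi(s_0)+\psi(k-s_0)>2\psi(1)=-2\gamma.
$$
Substituting into the displayed identity gives
$$
\frac{L'(f,D,s_0)}{L(f,D,s_0)}+\frac{L'(f,D,k-s_0)}{L(f,D,k-s_0)}<\log\!\left(\frac{4\pi^2 e^{2\gamma}}{ND^2}\right),
$$
which is strictly negative whenever $ND^2>4\pi^2 e^{2\gamma}$, giving the non-vanishing. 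The only step that really needs care is justifying that the Atkin--Lehner operator produces the same $L$-function up to a nonzero scalar (so that the log-derivatives match), but this is standard for normalized Hecke eigenforms and is the main place where the proof differs from the $\GL(d)$ argument.
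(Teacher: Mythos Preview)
Your proof is correct and follows essentially the same route as the paper: you take the logarithmic derivative of the functional equation \eqref{FunEMF1}--\eqref{FunEMF} to obtain the key identity (which is the paper's equation \eqref{fracab}), then apply Lindemann's theorem to the difference $\log(N_2D_2^2/N_1D_1^2)$ for the first part and the monotonicity of $\psi$ together with $\psi(1)=-\gamma$ for the second. Your explicit remark that $f|_kW_N$ is a scalar multiple of $f$ (so the log-derivatives coincide) makes transparent a step the paper leaves implicit, but otherwise the arguments are the same.
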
 
\begin{rmk}
Same result holds if we replace $k \geq 3$ by $k \geq 5$ and 
$ND^2 >  4\pi^2e^{2\gamma}$ by $ND^2 >  4 \pi^2 e^{2\gamma - 2 }$.
\end{rmk}
\begin{proof}
Let $f \in E(N,D,k, s_0)$. Using \eqref{FunEMF} and 
\eqref{FunEMF1}, and taking logarithmic derivative, we get
\begin{equation}\label{relation}
2\log{2\pi}-\log{ND^2}-\psi(k-s)
-
\frac{L'(f, D ,k-s)}{L(f, D, k-s)}=\psi(s) + \frac{L'(f, D, s)}{L(f, D, s)}
\end{equation}
Putting $s=s_0$, in the above equation, we get
\begin{equation}\label{fracab}
\frac{L'(f,D,s_0)}{L(f,D,s_0)}+\frac{L'(f,D,k-s_0)}{L(f,D,k-s_0)}
=2\log{2\pi}-\psi(s_0)-\psi(k-s_0)-\log{N}-2\log{D}.
\end{equation}
Let $f_1\in E(N_1,D_1,k,s_0)$ and $f_2\in E(N_2,D_2,k,s_0)$ be 
such that their corresponding values are distinct and algebraic. 
Then by \eqref{fracab}, 
$$
\frac{L'(f_1,D_1,s_0)}{L(f_1,D_1,s_0)}+\frac{L'(f_1,D_1,k-s_0)}{L(f_1,D_1,k-s_0)} 
- \frac{L'(f_2,D_2,s_0)}{L(f_2,D_2,s_0)}-\frac{L'(f_2,D_2,k-s_0)}{L(f_2,D_2,k-s_0)}
=\log{\frac{N_2D_2^2}{N_1D_1^2}}
$$
is also algebraic, which is a contradiction to Lindemann's theorem. 
For the second part of the theorem, the assumptions
on $s_0$ and $k$ imply $k/2\pm a/b \ge 1$. 
Putting $s_0=  k/2 + a/b $ in \eqref{fracab}, we get 
\begin{equation*}
\frac{L'(f,D,s_0)}{L(f,D,s_0)}+\frac{L'(f,D,k-s_0)}{L(f,D,k-s_0)}
\leq  2\log{2\pi} - \log{ND^2} + 2\gamma<0
\end{equation*}
whenever $ND^2>4\pi^2e^{2\gamma}$.
\end{proof}

The case $s_0=k/2$, has been studied in \cite{GMR,NK}. This result can 
be considered a generalisation of their result. The result for half-integer 
weight modular forms of weight $k$ and level $N$, at $s_0=k/2$  
was mentioned in \cite{RS} but the contribution 
of level $N$ was missing in their expression. 
We therefore state the result here.

\begin{cor}
Let $k \in \frac{1}{2}+\mathbb{N}$ and $f\in \mathcal{S}_{k}(\Gamma_0(N))$ 
be a Hecke eigen cusp form such that 
$L(f,k/2)\neq 0$. Then, we get 
$$
\frac{L'(f,k/2)}{L(f,k/2)}= \log\pi-\psi(k/2)-\frac{1}{2}\log \frac{N}{4}.
$$
Furthermore, $L'(f,k/2)\neq 0$.
\end{cor}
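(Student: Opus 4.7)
The plan is to extract this corollary as the $D = 1$, $s_0 = k/2$ specialization of the argument of the preceding theorem, adapted to the half-integer weight setting. When $D = 1$ the quadratic twist is trivial, so $L(f, 1, s) = L(f, s)$; and since $f$ is a Hecke eigenform of half-integer weight on $\Gamma_0(N)$ with $4 \mid N$, it is also an eigenform of the Fricke involution $W_N$. Therefore \eqref{FunEMF} collapses to $\Lambda(f, 1, s) = \epsilon\, \Lambda(f, 1, k-s)$ for some constant $\epsilon$; applying the functional equation twice forces $\epsilon^2 = 1$, and the hypothesis $L(f, k/2) \neq 0$ then fixes $\epsilon = 1$.

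With the symmetric functional equation in place, substituting $D = 1$ and $s_0 = k/2$ into \eqref{fracab} yields
\begin{equation*}
2 \cdot \frac{L'(f, k/2)}{L(f, k/2)} \;=\; 2 \log(2\pi) - 2 \psi(k/2) - \log N,
\end{equation*}
and hence $L'(f, k/2)/L(f, k/2) = \log(2\pi) - \psi(k/2) - \tfrac{1}{2}\log N$. The elementary identity $\log(2\pi) - \tfrac{1}{2}\log N = \log \pi + \log 2 - \tfrac{1}{2}\log N = \log \pi - \tfrac{1}{2}\log(N/4)$ then puts the answer in the claimed form and exhibits the level contribution that was missing from the expression recorded in \cite{RS}.

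For the non-vanishing claim $L'(f, k/2) \neq 0$, I would leverage the fact that, for half-integer $k$, one has $k/2 = n/4$ with $n$ a positive odd integer, so $\psi(k/2)$ carries an explicit $\pm \pi/2$ contribution. Concretely, iterating the recurrence \eqref{Gamma} from $\psi(1/4) = -\gamma - 3\log 2 - \pi/2$ and $\psi(3/4) = -\gamma - 3\log 2 + \pi/2$ (both of which follow from \eqref{Duplication} and \eqref{Reflection} at $z = 1/4$) gives $\psi(k/2) = -\gamma - 3\log 2 \pm \tfrac{\pi}{2} + r$ for some $r \in \Q$, the sign being determined by the residue of $2k$ modulo $4$. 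Substituting, the vanishing $L'(f, k/2) = 0$ would translate into an explicit $\Q$-linear identity expressing $\pi/2$ in terms of $1$, $\log \pi$, $\gamma$, $\log 2$, and $\log N$. I expect this to be the main obstacle: since neither $\log \pi$ nor $\gamma$ is currently known to be transcendental, the argument here will need to combine Lindemann's transcendence of $\pi$ with Baker's theorem applied to the logarithms of the algebraic numbers $2$ and $N$, so that the $\pi/2$ on the left cannot be matched by any admissible combination on the right.
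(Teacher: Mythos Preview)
Your derivation of the explicit formula is correct and matches the paper: both simply specialize \eqref{fracab} at $D=1$, $s_0=k/2$ and rewrite $\log(2\pi)-\tfrac12\log N$ as $\log\pi-\tfrac12\log(N/4)$.

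The non-vanishing argument, however, has a genuine gap. You reduce $L'(f,k/2)=0$ to an identity of the shape
\[
\pm\frac{\pi}{2} \;=\; \log\pi + \gamma + 3\log 2 - r - \tfrac12\log(N/4), \qquad r\in\Q,
\]
and propose to rule this out via Lindemann plus Baker. But neither theorem can do this: Baker applies only to $\overline{\Q}$-linear relations among logarithms of \emph{algebraic} numbers, and $\log\pi$ is not such a logarithm; nor do we know anything about algebraic relations among $\pi$, $\log\pi$, and $\gamma$ (indeed, even the irrationality of $\gamma$ is open). So the transcendence route is blocked by current knowledge, not merely ``an obstacle''.

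The paper proceeds entirely differently and avoids transcendence for this step. It treats the right-hand side of the formula as a \emph{real} quantity and shows it is strictly negative by elementary numerical bounds. Concretely, one splits into the two residue classes $k\in\tfrac12+2\bN$ and $k\in\tfrac32+2\bN$, uses the recurrence \ref{Rec} to reduce $\psi(k/2)$ to $\psi(1/4)$ or $\psi(3/4)$ plus a finite rational sum $\sum 4/(4n+\beta)$, inserts the known values $-\psi(1/4)=\gamma+3\log2+\pi/2$ and $\psi(3/4)=\psi(1/4)+\pi$, and then checks that for $N>4$ the resulting expression is negative once $k$ is past a small threshold (the finitely many remaining $(k,N)$ are handled directly; the case $N=4$ is quoted from \cite{NK}). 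So the non-vanishing is an inequality, not an irrationality.
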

\begin{proof} The proof of the first part follows by 
taking $s_0=k/2$ and $D=1$ in \eqref{fracab}.
Non-vanishing of $L'(f, k/2)$ when $N=4$ was shown in \cite{NK}. 
Therefore, we assume $N>4$.
For $k \in \frac{1}{2} + 2 \bN$, $k>2$,
$$
\frac{L'(f, k/2)}{L(f, k/2)} 
= 
\log{\pi}-\psi(1/4)
-
\sum_{n=0}^{\left[\frac{k}{2}\right]-1}\frac{4}{4n+1}
-\frac{1}{2}\log{\frac{N}{4}}.
$$
It is known that
$-\psi(1/4)=\gamma+3\log{2}+\frac{\pi}{2}$. 
For $k > 6,$
$$
\log \pi-\psi(1/4)<5.37223<\sum_{n= 0}^{\left[\frac{k}{2}\right]-1}\frac{4}{4n+1}
+\frac{1}{2}\log{2}
$$ 
and therefore $L'(f, k/2)\neq 0$.
When $k \in \frac{3}{2}+2\mathbb{N}$, $k>2$, 
$$
\frac{L'(f,k/2)}{L(f,k/2)}=\log{\pi}-\psi(3/4)
-\sum_{n= 0}^{\left[\frac{k}{2}\right]-1}\frac{4}{4n+3}
-\frac{1}{2}\log{\frac{N}{4}}.
$$
Using the reflection principle, we get $\psi(3/4)=\psi(1/4)+\pi$. 
Now, for $k > 5$,
$$
\log{\pi}-\psi(3/4)<2.231<\sum_{n= 0}^{\left[\frac{k}{2}\right]-1}
\frac{4}{4n+3}+\frac{1}{2}\log{2},
$$
which implies $L'(f, k/2)\neq 0$. The remaining cases follow by similar computations.
\end{proof}

\begin{thm}\label{main2}
Let $ N \geq 1$ be an integer and $k \in \frac12\mathbb{N}$. 
Let $s_0 \in \C$ and $J_N$ be a finite set consisting of fundamental 
discriminants $D$ such that $(D,N)=1$ and $ E(N,D,k, s_0)$ is 
non-empty. Further assume that $J_N$ has property
$\bf{A}$. Then the set
$$
\left\{ \frac{L'(f,D,s_0)}{L(f,D,s_0)} + \frac{L'(f, D, k-s_0)}{L(f, D, k-s_0)}
~\Big|~
 f\in E(N, D, k, s_0), D \in J_N \right\}
 $$ 
 spans a vector space over $\overline{\Q}$ of dimension at least $|J_N|-1$.
\end{thm}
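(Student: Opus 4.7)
The plan is to mirror the proof of the analogous automorphic-$L$-function statement in the previous section: use the logarithmic-derivative identity \eqref{fracab} to exhibit every member of the target set as a common translate of $-\log D^2$, and then combine Baker's theorem with \lemref{LI}.

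By \eqref{fracab}, for every $D \in J_N$ and every $f \in E(N, D, k, s_0)$,
\begin{equation*}
\frac{L'(f,D,s_0)}{L(f,D,s_0)} + \frac{L'(f,D,k-s_0)}{L(f,D,k-s_0)} \;=\; w - \log D^2,
\end{equation*}
where $w := 2\log 2\pi - \psi(s_0) - \psi(k - s_0) - \log N$ is a single complex number depending only on $N, k, s_0$, and in particular independent of both $D$ and $f$. Hence, as a subset of $\C$, the set appearing in the theorem is precisely $\{\,w - \log D^2 : D \in J_N\,\}$; any apparent multiplicity arising from different Hecke eigenforms attached to the same discriminant contributes nothing extra to the $\overline{\Q}$-span.

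Next, I would note that if $J_N$ has property $\bf{A}$ then so does $\{D^2 : D \in J_N\}$, since a prime $p$ witnessing property $\bf{A}$ for $D$ divides $D^2$ but none of the other $D'^2$. Unique factorization then forces $\{\log D^2 : D \in J_N\}$ to be linearly independent over $\Q$: after clearing denominators and exponentiating, a putative relation becomes $\prod D^{2a_D} = 1$, which is incompatible with the property-$\bf{A}$ prime. Baker's theorem promotes this to linear independence over $\overline{\Q}$. Applying \lemref{LI} with $v_D := -\log D^2$, $r_D := -1$, and the common shift $w$ yields a subspace of $\overline{\Q}$-dimension at least $|J_N| - 1$, as required.

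There is no serious obstacle; once one observes that the right-hand side of \eqref{fracab} is genuinely $f$-independent, the argument is mechanically identical to the automorphic case already treated. The only minor points worth flagging are that fundamental discriminants may be negative, so one works with $\log D^2$ (a positive real number) rather than $\log D$, and that property $\bf{A}$ must be verified to be preserved under squaring before Baker's theorem can be invoked.
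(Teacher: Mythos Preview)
Your proposal is correct and follows essentially the same route as the paper: invoke \eqref{fracab} to write each value as $w-\log D^{2}$ with $w$ independent of $D$ and $f$, use property~$\mathbf{A}$ plus Baker's theorem to get $\overline{\Q}$-linear independence of the $\log D^{2}$, and finish with \lemref{LI}. Your choice to work with $\log D^{2}$ rather than $\log D$ is a small but welcome refinement, since fundamental discriminants can be negative.
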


\begin{thm}\label{main3}
Let $k \in \frac12\mathbb{N}$ and $s_0 \in \C$.  
For a fundamental discriminant $D$, let $J_D$ be a finite set
of natural numbers $N \geq 1$ 
such that $ (D,N)=1$ and $E(N,D,k, s_0)$ is non-empty. 
Further assume that $J_D$ has property $\bf{A}$. Then the set
$$
\left\{ \frac{L'(f,D,s_0)}{L(f,D,s_0)} +\frac{L'(f, D, k-s_0)}{L(f, D, k-s_0)}
~\Big|~ 
f\in E(N,D,k,s_0), N \in J_D \right\}
$$ 
spans a vector space over $\overline{\Q}$ of dimension at least $|J_D|-1$.
\end{thm}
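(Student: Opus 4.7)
The plan is to repeat, almost verbatim, the strategy used earlier for automorphic $L$-functions that combines Baker's theorem with \lemref{LI}, since the identity \eqref{fracab} plays the role that \eqref{glds0} played in that setting. The essential observation is that once $D$, $k$, and $s_0$ are held fixed, the right-hand side of \eqref{fracab} separates into the variable term $-\log N$, depending on $N \in J_D$, and a term
$$
w := 2\log{2\pi} - \psi(s_0) - \psi(k - s_0) - 2\log D
$$
that is the same for every $N \in J_D$ and every $f \in E(N,D,k,s_0)$. Consequently the set in the statement coincides with $\{-\log N + w : N \in J_D\}$, a subset of $\C$ of cardinality $|J_D|$; in particular, the specific choice of Hecke eigenform $f$ does not matter.

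Next, since $J_D$ has property $\mathbf{A}$, the set $\{\log N : N \in J_D\}$ is linearly independent over $\Q$, and Baker's theorem upgrades this to linear independence over $\overline{\Q}$. Setting $v_N := -\log N$, the family $\{v_N : N \in J_D\}$ is therefore a $\overline{\Q}$-linearly independent set of cardinality $|J_D|$ in $\C$.

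Finally, I would invoke \lemref{LI} with the field $\K = \overline{\Q}$, the linearly independent family $\{v_N\}_{N \in J_D}$, the vector $w \in \C$ defined above, and scalars $r_N = -1$ for each $N \in J_D$. The lemma then yields that $\{v_N + w : N \in J_D\}$ spans a subspace of dimension at least $|J_D| - 1$ over $\overline{\Q}$, which is exactly the claimed bound. There is no substantive obstacle: the argument is a direct transcription of the proof of \thmref{main2} with the roles of $N$ and $D$ exchanged. The only point that has to be verified is that \eqref{fracab} cleanly isolates the $N$-dependence into a single logarithmic term, and this is immediate from the form of the right-hand side.
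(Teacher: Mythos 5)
Your proposal is correct and follows essentially the same route as the paper: both isolate the $N$-dependence of \eqref{fracab} into the single term $-\log N$, take $w=2\log 2\pi-\psi(s_0)-\psi(k-s_0)-2\log D$, use property $\bf{A}$ plus Baker's theorem to get $\overline{\Q}$-linear independence of $\{\log N : N\in J_D\}$, and conclude via \lemref{LI}. Your explicit remark that the value depends only on $N$ (not on the choice of $f$) is a small but welcome clarification that the paper leaves implicit.
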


\begin{proof}[Proof of \thmref{main2} and \thmref{main3}]
Using unique factorization of primes, the sets 
$\{\log{D}\big| D \in J_N\}$ and $\{\log{N}\big| N \in J_D\}$ 
are linearly independent over $\Q$. Hence by Baker's theorem, 
they are linearly independent sets over $\overline{\Q}$ 
of cardinality $|J_N|$ and $|J_D|$ respectively. Choosing
$w=2\log{2\pi}-\psi(s_0)-\psi(k-s_0)-\log{N}$ and 
$w=2\log{2\pi}-\psi(s_0)-\psi(k-s_0)-2\log{D}$ in the respective 
cases and using \eqref{fracab} and \lemref{LI} we get the results.
\end{proof}

In particular, we have
\begin{cor}
Let $k \in \frac12\mathbb{N}$ and $D$ be a fundamental discriminant.
Let $J_D$ be a finite set of natural
numbers $N \geq 1$ 
such that $(D,N)=1$ and $E(N,D,k, k/2)$ is non-empty. 
Further assume that $J_D$ has property $\bf{A}$. Then the set
$$
\left\{ \frac{L'(f,D,k/2)}{L(f,D,k/2)} 
~\Big|~ 
f\in E(N,D,k,k/2), N \in J_D \right\}
$$ 
spans a vector space over $\overline{\Q}$ of dimension at least $|J_D|-1$.
\end{cor}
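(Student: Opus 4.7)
The plan is to deduce this corollary as a direct specialization of \thmref{main3} to the symmetric point $s_0 = k/2$. First I would observe that at $s_0 = k/2$ we have $k - s_0 = k/2 = s_0$, so the two summands in the expression appearing in \thmref{main3} coincide, giving
\[
\frac{L'(f,D,s_0)}{L(f,D,s_0)} + \frac{L'(f,D,k-s_0)}{L(f,D,k-s_0)} \;=\; 2 \cdot \frac{L'(f,D,k/2)}{L(f,D,k/2)}
\]
for every $f \in E(N,D,k,k/2)$ with $N \in J_D$.

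Next, I would invoke the elementary fact that scaling every element of a set of vectors by the same nonzero algebraic number (here by $1/2 \in \overline{\Q}$) preserves the dimension of its $\overline{\Q}$-linear span. Consequently, the $\overline{\Q}$-span of
$\left\{L'(f,D,k/2)/L(f,D,k/2)\right\}$
and the $\overline{\Q}$-span of
$\left\{2\,L'(f,D,k/2)/L(f,D,k/2)\right\}$
have the same dimension.

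It then remains to verify that the hypotheses of \thmref{main3} are satisfied for the set $J_D$ and the point $s_0 = k/2$, which is immediate from the corollary's assumptions: the set $J_D$ consists of natural numbers coprime to $D$, has property $\mathbf{A}$, and $E(N,D,k,k/2)$ is non-empty for each $N \in J_D$. Applying \thmref{main3} to the scaled set then gives the desired lower bound $|J_D|-1$. There is no real obstacle here; the entire content of the corollary is the observation that at $s_0 = k/2$ the diagonal symmetry of the functional equation collapses the sum to a single term, after which the dimension count from \thmref{main3} transfers verbatim.
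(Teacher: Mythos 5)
Your proposal is correct and matches the paper's intent exactly: the corollary is stated with ``In particular'' as an immediate specialization of \thmref{main3} at $s_0=k/2$, where the two summands coincide and the resulting factor of $2$ is harmless since scaling by a nonzero element of $\overline{\Q}$ does not change the dimension of the span.
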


Instead of investigating the twists of modular $L$-values at arbitrary 
points inside the critical strip, if we specialize at rational points $\frac{a}{q}$
 with $(a, q)=1$, we have the following theorem.
\begin{thm}\label{coprime}
Let $k \geq 2$. Then there exists an integer $q_0$ such that for any
integer $q \ge 7$ coprime to $q_0$ and 
$f \in \cap_{\substack{1\leq a \leq q\\ (a,q)=1}}E(N,D,k,a/q)$, the 
$\Q$-vector space spanned by the set
$$
\left\{ \frac{L'(f,D, a/q)}{L(f, D, a/q)} 
+\frac{L'(f, D, k-a/q)}{L(f, D, k-a/q)}
~\Big|~
 1 \leq a  <q/2, (a,q)=1 \right\}
$$
has dimension at least 
$\frac{\phi(q)}{2}-2$. 
\end{thm}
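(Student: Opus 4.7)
The plan is to invoke equation~\eqref{fracab}, reduce $\psi(k-a/q)$ modulo $\Q$ using the recurrence for the digamma function, invoke Remark~\ref{dimension} to produce $\phi(q)/2$ linearly independent polygamma combinations, and then absorb the overall constant together with the rational shifts by a short quotient argument whose cost is at most two dimensions.

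Setting $s_0 = a/q$ in \eqref{fracab} gives, for each $1 \le a < q/2$ with $(a,q)=1$,
$$
X_a \;:=\; \frac{L'(f,D,a/q)}{L(f,D,a/q)} + \frac{L'(f,D,k-a/q)}{L(f,D,k-a/q)} \;=\; C - \psi(a/q) - \psi(k-a/q),
$$
where $C := 2\log 2\pi - \log(ND^2)$ is independent of $a$. For integer $k \ge 2$, the recurrence \eqref{Gamma} gives $\psi(k-a/q) = \psi(1-a/q) + R_a$ with $R_a := \sum_{j=0}^{k-2}(j+1-a/q)^{-1} \in \Q$, so writing $v_a := \psi(a/q) + \psi(1-a/q)$ we have $X_a = (C - R_a) - v_a$. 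I would choose $q_0$ to be a multiple of $2$ and of the modulus produced by the Gun--Murty--Rath theorem; for any $q \ge 7$ coprime to this $q_0$, the involution $a \mapsto q-a$ on $\{1 \le a \le q-1 : (a,q)=1\}$ has no fixed point (since $q$ is odd), and the identity $v_a = v_{q-a}$ shows that $\{v_a : 1 \le a < q/2, (a,q)=1\}$ has exactly $\phi(q)/2$ elements that are $\Q$-linearly independent by Remark~\ref{dimension}.

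The key step is the linear-algebra reduction. Since $X_a + v_a = C - R_a \in \text{span}_\Q(1, C)$, a $\Q$-subspace of $\R$ of dimension at most $2$, passing to the quotient $\pi : \R \to \R/\text{span}_\Q(1, C)$ we have $\pi(X_a) = -\pi(v_a)$, whence
$$
\dim_\Q \text{span}_\Q\{X_a\} \;\ge\; \dim_\Q \pi(\text{span}_\Q\{v_a\}) \;\ge\; \dim_\Q \text{span}_\Q\{v_a\} - \dim_\Q \text{span}_\Q(1, C) \;\ge\; \tfrac{\phi(q)}{2} - 2.
$$

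The principal obstacle is the half-integer case $k \in \tfrac12 + \bN$, where the recurrence reduces $\psi(k-a/q)$ to $\psi(\tfrac12 - a/q)$ modulo $\Q$ rather than to $\psi(1-a/q)$. Combining the digamma reflection and duplication formulas one computes
$$
\psi(a/q) + \psi(\tfrac12 - a/q) \;=\; 2\psi(2a/q) - 2\log 2 - \pi\tan(\pi a/q).
$$
The $\psi(2a/q)$ contributions remain controlled by Gun--Murty--Rath, since $\{2a \bmod q : 1 \le a < q/2, (a,q)=1\}$ exhausts the even coprime classes of $(\Z/q\Z)^\times$; but the term $\pi\tan(\pi a/q) = \psi(\tfrac{q+2a}{2q}) - \psi(\tfrac{q-2a}{2q})$ naturally lives at level $2q$. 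Enlarging $q_0$ to satisfy the Gun--Murty--Rath hypothesis at level $2q$ as well, and tracking these additional polygamma values through the same quotient argument, recovers the same bound $\phi(q)/2 - 2$.
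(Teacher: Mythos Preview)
Your integer-weight argument is correct and matches the paper's proof exactly: both substitute $s_0=a/q$ into \eqref{fracab}, use the digamma recurrence to write $\psi(k-a/q)=\psi(1-a/q)+r(k,a,q)$ with $r(k,a,q)\in\Q$, invoke Remark~\ref{dimension} to get $\phi(q)/2$ independent vectors $v_a=\psi(a/q)+\psi(1-a/q)$, and then lose two dimensions to the constants. The paper phrases this last step as an application of Lemma~\ref{LI} (implicitly twice, once for $1$ and once for $C$), while your quotient by $\mathrm{span}_\Q(1,C)$ is an equivalent and arguably cleaner formulation. Your added requirement $2\mid q_0$ is harmless but unnecessary: for $q\ge 3$ the involution $a\mapsto q-a$ on reduced residues already has no fixed point.

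Your half-integer discussion, however, is not part of the paper's proof and is not needed for the statement as the paper intends it: the formula $r(k,a,q)=\sum_{j=1}^{k-1}(j-a/q)^{-1}$ in the paper's argument only makes sense for integer $k$, so the theorem (despite the section allowing $k\in\tfrac12\mathbb N$) is being proved for $k\in\Z_{\ge 2}$. If you did want to cover $k\in\tfrac12+\Z_{\ge 2}$, your final sentence is too quick. After your reduction you have $X_a$ equal, modulo $\mathrm{span}_\Q(1,C')$, to $-2\psi(2a/q)+\psi\bigl(\tfrac{q+2a}{2q}\bigr)-\psi\bigl(\tfrac{q-2a}{2q}\bigr)$, a combination of digamma values at \emph{both} levels $q$ and $2q$; invoking the Gun--Murty--Rath theorem at level $2q$ tells you the $\phi(2q)=\phi(q)$ values $\psi(b/2q)$ with $(b,2q)=1$ are independent, but it says nothing directly about the mixed set you actually obtain, nor does it immediately yield that the $\phi(q)/2$ specific combinations above are independent. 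Closing that gap would require a genuine additional argument.
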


\begin{proof}
Putting $s_0=a/q$ in \eqref{fracab}, we get
$$
\frac{L'(f, D,a/q)}{L(f,D, a/q)} +\frac{L'(f, D, k-a/q)}{L(f, D, k-a/q)}
=\log\frac{4\pi^2}{ND^2}-\psi\left(\frac{a}{q}\right)-\psi\left(1-\frac{a}{q}\right)
-r(k,a,q),
$$
where $\displaystyle r(k,a,q)= \sum_{j=1}^{k-1}\frac{1}{j-a/q} \in \Q$. 
The result now follows by \rmkref{dimension} and \lemref{LI}.
\end{proof}

\subsection{Higher derivatives of $L$-functions associated to integer  and
half-integer weight modular forms}

For any integer $m\geq 1$, taking the $m$-th derivative of 
\eqref{relation}, we get  
\begin{equation}\label{highMF}
\left(\frac{L'(f,D,s)}{L(f,D,s)}\right)^{(m)}
+
(-1)^m\left(\frac{L'(f,D,k-s)}{L(f,D,k-s)}\right)^{(m)}
=
- \psi^{(m)}(s) + (-1)^{m+1 } \psi^{(m)}(k-s).
\end{equation}
In this set-up, we have the following theorems.

\begin{thm}\label{thm11}
For $M \in \mathbb{N}$, and $1 \leq \beta \leq 4$, consider the sets
\begin{equation*}
\mathcal{L}_{\beta,M}
=
\left\{
\frac{L^{(m)}(f,D,k/2)}{L(f,D,k/2)} 
\middle\vert 
\begin{array}{c} 
N\in \Z_{\ge 1},~ (D,N)=1,~ 2k\equiv \beta\!\!\! \pmod{4}\\
 f \in E(N, D, k, k/2), 1 \leq m \leq M 
\end{array} 
\right\}.
\end{equation*}
Then we have  
\begin{equation*} 
\tr_{\Q}(\mathcal{L}_{\beta, M}) 
~\geq~ 
\tr_{\Q}\left(\left\{\psi^{(2m)}
\left(\frac{\beta}{4}\right)
~\Big| ~
1 \leq m \leq \left[\frac{M-1}{2}\right]\right\}\right).
\end{equation*}
\end{thm}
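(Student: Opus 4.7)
The plan is to translate each ratio $L^{(m)}(f,D,k/2)/L(f,D,k/2)$ that populates $\mathcal{L}_{\beta,M}$ into a value of the logarithmic derivative $\Phi(s):=L'(f,D,s)/L(f,D,s)$ and its higher derivatives at $s=k/2$, then to feed these into \eqref{highMF} specialized at the symmetric point $s=k/2$, and finally to invoke the polygamma recurrence \eqref{Gamma} to reduce polygamma values at $k/2$ to polygamma values at $\beta/4$ modulo~$\Q$.

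In detail, fix an admissible tuple $(f,D,N,k)$ appearing in the defining set for $\mathcal{L}_{\beta,M}$; such a tuple exists for each $\beta\in\{1,2,3,4\}$ by the existence results cited before the theorem. Write $\ell(s)=\log L(f,D,s)$, so that $\Phi^{(j-1)}(k/2)=\ell^{(j)}(k/2)$. By Fa\`a di Bruno's formula, $L^{(m)}/L$ equals the complete Bell polynomial $B_m(\ell',\ldots,\ell^{(m)})$, and since $B_m$ has the form $\ell^{(m)}$ plus a polynomial with non-negative integer coefficients in $\ell',\ldots,\ell^{(m-1)}$, the change of variables between $\{L^{(m)}/L:1\le m\le M\}$ and $\{\ell^{(m)}:1\le m\le M\}$ is triangular and invertible over $\Z$; the two families therefore generate the same $\Q$-subring of $\C$, hence the same $\Q$-subfield. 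Next, setting $s=k/2$ in \eqref{highMF} collapses both sides to
\[
\bigl(1+(-1)^m\bigr)\,\Phi^{(m)}(k/2) \;=\; -\bigl(1+(-1)^m\bigr)\,\psi^{(m)}(k/2),
\]
which is trivial for odd $m$ and forces $\Phi^{(m)}(k/2)=-\psi^{(m)}(k/2)$ for every even $m\ge 2$; equivalently, $\ell^{(j)}(k/2)=-\psi^{(j-1)}(k/2)$ for every odd $j$ with $3\le j\le M$. Finally, the hypothesis $2k\equiv\beta\pmod 4$ makes $k/2-\beta/4\in\bN$, so \eqref{Gamma} yields $\psi^{(n)}(k/2)-\psi^{(n)}(\beta/4)\in\Q$ for every $n\ge 1$.

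Assembling the three steps, each value $\psi^{(2m)}(\beta/4)$ with $1\le m\le[(M-1)/2]$ (coming from the odd index $j=2m+1\in[3,M]$) lies in $\Q(\mathcal{L}_{\beta,M})$, whence
\[
\Q\bigl(\{\psi^{(2m)}(\beta/4):1\le m\le[(M-1)/2]\}\bigr) \;\subseteq\; \Q(\mathcal{L}_{\beta,M}),
\]
and passing to transcendence degrees over $\Q$ gives the claimed inequality. The main point, conceptual rather than technical, is that \eqref{highMF} at the symmetric point $s=k/2$ pins down only the \emph{even-order} derivatives of $\Phi$; the odd-order ones receive no constraint and carry the genuine arithmetic content of $f$. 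This asymmetry is what forces $[(M-1)/2]$ polygamma values in the conclusion instead of $M$. Everything else is index bookkeeping together with the standard Bell polynomial dictionary between $\{L^{(m)}/L\}$ and $\{\ell^{(j)}\}$.
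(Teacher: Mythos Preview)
Your proof is correct and follows essentially the same approach as the paper: both arguments specialize \eqref{highMF} at $s=k/2$ to get $\Phi^{(2m)}(k/2)=-\psi^{(2m)}(k/2)$, reduce $\psi^{(2m)}(k/2)$ to $\psi^{(2m)}(\beta/4)$ modulo $\Q$ via \eqref{Gamma}, and pass between the families $\{L^{(m)}/L\}$ and $\{(L'/L)^{(j)}\}$ by a triangular integral change of variables. The only cosmetic difference is that you name this last step explicitly (Fa\`a di Bruno / complete Bell polynomials), while the paper phrases it as an induction.
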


\begin{proof}
For $1 \leq \beta \leq 4$ and $k \in \frac12\Z_{\ge 1}$, 
let $f \in E(N, D, k, k/2)$. Putting $s=k/2$ in
\eqref{highMF}, we get 
\begin{equation*}
\left(\frac{L'(f,D,s)}{L(f,D,s)}\right)^{(2m)}_{|s=k/2}=-\psi^{(2m)}\left(\frac{k}{2}\right)
\quad\text{for } m \geq 1.
\end{equation*}
Applying \eqref{Gamma}, we have
\begin{equation}\label{psik2}
\psi^{(2m)}\left(\frac{k}{2}\right)=\psi^{(2m)}\left(\frac{\beta}{4}\right)
+
4^{2m+1}(2m)!\sideset{}{'}\sum_{j=0}^{\left[\frac{k}{2}\right]-1}(4j
+
\beta)^{-2m-1}-\delta_k\frac{2^{2m+1}(2m)!}{k^{2m+1}}
\end{equation}
where $\delta_k=1$ if and only if $k \in 2\Z_{\ge 1}$. The notation $\sum'$ 
denotes that the sum is $0$ when $k < 2$. Let 
\begin{align*}
\tilde{\mathcal{L}}_{\beta,M} = 
\left\{\left(\frac{L'(f,D,s)}{L(f,D,s)}\right)^{(m)}|_{s=k/2}
\middle\vert 
\begin{array}{c} 
N\in \Z_{\ge 1},~ (D,N)=1,~ 2k\equiv \beta \!\!\! \pmod{4}\\
 f \in E(N, D, k, k/2), 0 \leq m \leq M -1
\end{array} 
\right\}.
\end{align*}
Using induction we see that for any  $j \in \Z_{\ge 1}$,  
$\displaystyle\frac{L^{(j)}(f,D,s)}{L(f,D,s)}$ can be expressed as a 
polynomial in $\displaystyle\frac{L'(f,D,s)}{L(f,D,s)}, 
\Big(\frac{L'(f,D,s)}{L(f,D,s)}\Big)^{(1)}, \ldots , 
\Big(\frac{L'(f,D,s)}{L(f,D,s)}\Big)^{(j-1)}$ 
with coefficients in $\Z$ and conversely 
for any ${j \in \bN}$, 
$\displaystyle\Big(\frac{L'(f,D,s)}{L(f,D,s)}\Big)^{(j)}$
can be expressed as a polynomial in
$\displaystyle\frac{L'(f,D,s)}{L(f,D,s)},
\frac{L^{(2)}(f,D,s)}{L(f,D,s)}, \ldots,$ $\displaystyle \frac{L^{(j+1)}(f,D,s)}{L(f,D,s)}$
with coefficients in $\Z$.
This implies  
$
\tr_{\Q}(\mathcal{L}_{\beta,M})=\tr_{\Q}(\tilde{\mathcal{L}}_{\beta,M})
$,
which completes the proof.
\end{proof}
\begin{rmk}\label{rmk}
It follows from \eqref{zetagamma} that 
$\psi^{2m}(1)=-(2m)!\zeta(2m+1)$.
When $ \beta = 2$ or $ 4$, we have 
\begin{equation*} 
\tr_{\Q}(\mathcal{L}_{\beta,M}) 
~\geq~ 
\tr_{\Q}\Big(\Big\{\psi^{(2m)}(1)~\Big| ~1 \leq m \leq \Big[\frac{M-1}{2}
\Big]\Big\}\Big)
=
\tr_{\Q}\Big(\Big\{\zeta(2m+1)~\Big| ~1 \leq m \leq \Big[\frac{M-1}{2}
\Big]\Big\}\Big).
\end{equation*}
The case $\beta = 4$ is a consequence of \thmref{thm11},
while the case $\beta =2 $ then follows by using \eqref{psi12}.
A folklore conjecture states that $\zeta(3), \zeta(5), \ldots $ are 
algebraically independent over $\Q$ which then implies
$$
\tr_{\Q}(\mathcal{L}_{\beta,M}) \geq \Big[\frac{M-1}{2}\Big].
$$
\end{rmk}

\begin{thm}
For $s_0 \in \Q$, consider the set
$$
\mathcal{S}(s_0) =
 \{ f \in E(N, D, k, s_0) ~~|~ N \geq 1, (D,N)=1, k \in \frac12 \bN \}.
$$
For any $m \geq 1$, if at least one element of the set 
$$
\Big\{\Big(\Big(\frac{L'(f,D,s)}{L(f, D, s)
}\Big)^{(m)}
+
(-1)^m\Big(\frac{L'(f, D, k-s)}{L(f, D, k-s)}
\Big)^{(m)}\Big)
|_{s=s_0}
~\Big|~
f \in S(s_0) \Big\}
$$
is algebraic, then all the elements of this set are algebraic.
\end{thm}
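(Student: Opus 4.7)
The plan is to apply identity~\eqref{highMF} at $s=s_0$ to decouple each element of the set from $f$, $N$, and $D$, reducing it to an explicit expression in $s_0$ and the weight $k$ alone. For any $f \in \mathcal{S}(s_0)$ of weight $k$, setting $s = s_0$ in~\eqref{highMF} gives
\begin{equation*}
V_f := \left(\left(\frac{L'(f,D,s)}{L(f,D,s)}\right)^{(m)} + (-1)^m\left(\frac{L'(f,D,k-s)}{L(f,D,k-s)}\right)^{(m)}\right)\bigg|_{s=s_0} = -\psi^{(m)}(s_0) + (-1)^{m+1}\psi^{(m)}(k-s_0).
\end{equation*}
The essential point is that the right-hand side depends only on $s_0$ and the weight $k$, not on the form, its level, or the twisting discriminant. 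Hence all forms of the same weight give the same value of $V_f$, and the problem reduces to understanding how $V_f$ varies as $k$ ranges over $\frac{1}{2}\bN$.

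Next, for any two forms $f_1, f_2 \in \mathcal{S}(s_0)$ of weights $k_1$, $k_2$, I would analyse
\begin{equation*}
V_{f_1} - V_{f_2} = (-1)^{m+1}\bigl[\psi^{(m)}(k_1 - s_0) - \psi^{(m)}(k_2 - s_0)\bigr]
\end{equation*}
and show it lies in $\overline{\Q}$. When $k_1 - k_2 \in \Z$, iterating the recurrence~\eqref{Gamma} writes this as a finite sum of terms of the form $\pm m!/(k_2 - s_0 + j)^{m+1}$ with $j \in \Z$; since $s_0 \in \Q$, the entire sum lies in $\Q \subset \overline{\Q}$. For mixed-parity pairs $k_1 - k_2 \in \frac{1}{2} + \Z$, I would first use~\eqref{Gamma} to reduce to the base case $\{k_1,k_2\} = \{1, 1/2\}$, and then invoke the duplication formula~\eqref{Duplication} with $z = 1/2 - s_0$, which couples $\psi^{(m)}(1/2 - s_0)$ and $\psi^{(m)}(1 - s_0)$ to the value $\psi^{(m)}(1 - 2s_0)$ at a rational argument.

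The conclusion is then immediate from the field property of $\overline{\Q}$: once each pairwise difference $V_{f_1} - V_{f_2}$ is known to lie in $\overline{\Q}$, algebraicity of a single $V_{f_1}$ propagates to every $V_{f_2}$ via $V_{f_2} = V_{f_1} - (V_{f_1} - V_{f_2})$. The main technical obstacle is the mixed-parity case, where $k_1 - s_0$ and $k_2 - s_0$ are rationals with different denominators and the duplication formula by itself only relates their \emph{sum} to a third polygamma value $\psi^{(m)}(1 - 2s_0)$; to close the gap I expect to combine~\eqref{Duplication} with the reflection principle~\eqref{Reflection} (whose right-hand side at rational arguments is a derivative of $\cot \pi z$ at a rational point, hence algebraic up to an explicit power of $\pi$), and then keep careful track of the $\pi^{m+1}$-contributions so that they cancel in the difference.
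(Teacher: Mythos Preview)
Your reduction via \eqref{highMF} and the use of the recurrence \eqref{Gamma} when $k_1 - k_2 \in \Z$ coincide with the paper's argument: the paper simply asserts that $(-1)^m\bigl(\psi^{(m)}(k_1-s_0)-\psi^{(m)}(k_2-s_0)\bigr)$ is rational, and this is exactly what the recurrence yields when the weights differ by an integer.

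The genuine gap is the mixed-parity case $k_1 - k_2 \in \tfrac{1}{2}+\Z$. Your proposed fix via \eqref{Duplication} and \eqref{Reflection} cannot work: duplication only produces the \emph{sum} $\psi^{(m)}(1/2-s_0)+\psi^{(m)}(1-s_0)$, while reflection links $\psi^{(m)}(z)$ to $\psi^{(m)}(1-z)$, not to $\psi^{(m)}(z+\tfrac12)$. No combination of these identities isolates the required difference as an algebraic number, and in fact it is not one in general. For $m=2$, $s_0=1/4$, $k_1=1$, $k_2=1/2$, the reflection principle itself gives
\[
\psi^{(2)}\!\left(\tfrac34\right)-\psi^{(2)}\!\left(\tfrac14\right)
=\pi\,\frac{d^2}{dz^2}\cot(\pi z)\Big|_{z=1/4}
=2\pi^{3}\csc^{2}\!\tfrac{\pi}{4}\,\cot\tfrac{\pi}{4}
=4\pi^{3},
\]
so $V_{f_1}-V_{f_2}=\pm 4\pi^{3}$ is transcendental and the hoped-for $\pi^{m+1}$-cancellation does not occur. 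The paper's proof does not treat this case separately either; its blanket assertion of rationality is only justified when $k_1-k_2\in\Z$, so the argument as written (both yours and the paper's) is valid only for weights of a fixed parity.
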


\begin{proof}
Let $f_1,f_2\in \mathcal{S}(s_0)$. By \eqref{highMF}, we know that 
the result is true if $k_1=k_2$. For $k_1\neq k_2$, again 
using \eqref{highMF}, we note that the difference of their 
corresponding values is 
$$(-1)^m(\psi^{(m)}(k_1-s_0)-\psi^{(m)}(k_2-s_0))$$
which is a rational number. 
\end{proof}

\section{\large{ Hilbert modular forms}}
Let $\F$ be a totally real number field of degree $n$, with ring of integers 
$\cO_\F$, different $\mathfrak{D}_\F$ and discriminant $d_\F$.  For a non-zero 
integral ideal 
$\mathfrak{a}$, let $\N(\mathfrak{a})=[\cO_\F:\mathfrak{a}]$ denote its 
absolute norm.  We consider $\F$ as a subring of $\R^n$ by means of the embedding 
$$
\alpha \mapsto(\eta_1(\alpha), \ldots, \eta_n(\alpha)),
$$ 
where $\eta=(\eta_1, \ldots, \eta_n)$ are the distinct real embeddings of $\F$.
Let $h$ be the narrow class number of $\F$ and $\{t_h\}$ 
be a set of representatives of the narrow class group. 
For a fixed integral ideal $\n$ of $\F$ and $1 \leq \nu \leq h$, we 
define a congruence subgroup $\Gamma_{\nu}(\n)$ of $\GL_2(\F)$ as
$$
\Gamma_{\nu}(\n) =\left\{
\begin{pmatrix}
a &t_\nu^{-1}b\\
t_{\nu}c&d
\end{pmatrix}\Big| 
\begin{array}{lll}
a\in \cO_\F,&b\in \mathfrak{D}_\F^{-1},&\\
c \in \n \mathfrak{D}_\F, &d \in \cO_\F, &ad-bc \in \cO_\F^{\times}
\end{array}
\right\}.
$$
For any integer $k\geq 1$, we use $\kk$ to denote the $n$-tuple 
$(k, \ldots, k)$. Let $f_\nu$ be a Hilbert modular form of weight 
$\kk$ with respect to $\Gamma_\nu(\n)$. Then $f_\nu$ has a Fourier series expansion 
$$f_\nu(z)=\sum_{\xi}a_\nu(\xi)e^{2\pi i\xi z}$$
where $\xi$ runs over totally positive elements in $t_{\nu}\cO_\F$ or $\xi=0$. 
 
A Hilbert modular form $\f=(f_1, \ldots, f_{h})$ of weight $\kk$ and level 
$\n$ is a cusp form if each $f_{\nu}$ is a cusp form of 
weight $\kk$ on $\Gamma_{\nu}(\n)$. 

Let $\gamma=(\gamma_1, \ldots, \gamma_n) \in \GL_2(\R)^n$. Then 
for a Hilbert modular form $\f = (f_1, \ldots, f_{h})$ of weight~$\kk$
and for $z=(z_1, \ldots, z_{n})\in \bH^n$, we use the notations
$$
f_{\nu} |_\kk\gamma(z)= 
\prod_{i} j(\gamma_{i}, z_{i})^{-k}f_{\nu} (\gamma z), 
\qquad f_{\nu} ||_\kk\gamma(z)
= 
\prod_{i}\det{\gamma_{i}}^{k/2} j(\gamma_i, z_i)^{-k}f_{\nu}(\gamma z)
$$
where $1 \le \nu \le h$ and $j \left(
\begin{pmatrix} a &b\\
c&d
\end{pmatrix}, ~z\right) = cz + d.
$
For an integral ideal $\m$ of $\F$, there exists a unique $\nu$ and a 
totally positive $\xi \in t_{\nu}$ such that $\m=\xi t_{\nu}^{-1}\cO_\F$. 
Let $c(\m,\f)=a_{\nu}(\xi)\xi^{-k/2}\N(\m)^{k/2}$. Then the $L$-function 
attached to $\f$ is given by
$$
L(\f,s) = \sum_{ \m}\frac{\mathfrak{c}(\m, \f)}{\N(\m)^s}, \qquad \Re(s)\gg 1,
$$
where $\m$ runs over all non-zero integral ideals of $\cO_\F$. 
It has an analytic continuation to an entire function when $\f$ is a cusp form.\\
Let $\beta_{\nu}=\begin{pmatrix}
0&1\\
-q_{\nu}&0
\end{pmatrix}
$, 
where $q_{\nu}$ is a totally positive element of  $\F$. Then there exists unique 
$\nu'$ such that $t_{\nu}t_{\nu'}\cO_\F\n\mathfrak{D}_\F^2=q_{\nu}\cO_\F$.
Let $f'_{\nu'}=(-1)^kf_{\nu}||_\kk\beta_{\nu}$. Then $\f |J_{\n}=(f'_1, \ldots, f'_{h})$ 
is a Hilbert modular form of weight $\kk$ with respect to $\Gamma_{v'}(\n)$.
The completed $L$-function associated to $\f$ is defined as
\begin{equation}\label{lfunHMF}
 \Lambda(\f, s)=\N(\n\mathfrak{D}_\F^2)^{s/2}(2\pi)^{-ns}\Gamma(s)^nL(\f, s) 
\end{equation}
and satisfies the functional equation (see \cite{GS2})
\begin{equation}\label{FunHMF}
\Lambda(\f, s)=i^{nk}\Lambda(\f|J_\n, k-s),
\end{equation}
where $\f|J_\n= (f'_1, \ldots, f'_{h})$ is as defined above.
We say that $\f$ is normalized if $c(\cO_\F, \f)=1$.
From Proposition 2.10 of \cite{GS2}, we know that if $\f$ is a normalized cuspidal Hilbert Hecke
eigenform of weight $\kk=(k, \ldots, k)$ and conductor $\n$, then 
$\f|J_{\n}=c\f$ for some non-zero constant $c$.  
Recently, Hamieh and Raji \cite{HR} showed that for $k\gg 1$
and for certain points $s_0$ in the critical strip, there exists a Hecke 
eigenform $\f\in \mathcal{S}_{\kk}({\rm SL}_2(\cO_\F))$ 
such that $L(\f, s_0) \neq 0$.
Let $E(N,\kk,s_0) $ be the set of all normalized primitive cuspidal Hilbert 
Hecke eigenforms of weight $\kk$ and level $\n$ such 
that $\N(\n)=N$ and $L(\f, s_0) \neq 0$. Here we have
the following theorem.

\begin{thm}\label{noname}
Let $k \geq 1$ and $s_0 \in \C$. Then atmost one element of the set 
$$
\left\{ \frac{L'(\f_N,s_0)}{L(\f_N,s_0)} +
\frac{L'(\f_N,k-s_0)}{L(\f_N,k-s_0)}
\Big| \f_N\in E(N,\kk,s_0), N \geq 1 \right\}
$$ 
is algebraic. Furthermore, for $k \geq 5$, $n\geq 3$ and 
$s_0= k/2+a/b \in \Q$ with $ 0\leq a/b \le k/2-2$, 
we have
$$
\frac{L'(\f, s_0)}{L(\f, s_0)}+\frac{L'(\f, k- s_0)}{L(\f, k- s_0)} \neq 0.
$$
\end{thm}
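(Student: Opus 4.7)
The approach imitates the proofs of the earlier theorems in this paper. The first step is to derive the Hilbert analogue of the identities \eqref{glds0} and \eqref{fracab}. Taking logarithmic derivatives on both sides of the functional equation \eqref{FunHMF} after inserting \eqref{lfunHMF}, and using the fact recorded above (Proposition 2.10 of \cite{GS2}) that for a normalized cuspidal Hilbert Hecke eigenform one has $\f|J_\n = c\f$ for some nonzero constant $c$, so that $L'(\f|J_\n,s)/L(\f|J_\n,s) = L'(\f,s)/L(\f,s)$, and finally evaluating at $s=s_0$, I expect to obtain the master identity
$$
\frac{L'(\f_N,s_0)}{L(\f_N,s_0)}+\frac{L'(\f_N,k-s_0)}{L(\f_N,k-s_0)} = 2n\log(2\pi) - \log(\N(\n)d_\F^2) - n\psi(s_0) - n\psi(k-s_0).
$$

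For the first assertion, observe that with $\F$ (hence $n$ and $d_\F$) fixed, the right-hand side depends on $\f_N$ only through $N=\N(\n)$, so all elements arising from a common level coincide. If two distinct levels $N_1\neq N_2$ produced algebraic values, subtraction would make $\log(N_2/N_1)$ algebraic, contradicting Lindemann's theorem.

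For the second assertion, the hypotheses $k\geq 5$ and $s_0=k/2+a/b$ with $0\leq a/b\leq k/2-2$ force both $s_0$ and $k-s_0$ to lie in $[2,k-2]$. Using that $\psi$ is strictly increasing and concave on $(0,\infty)$, I would estimate
$$
\psi(s_0)+\psi(k-s_0) \;\geq\; \psi(2)+\psi(k-2) \;\geq\; \psi(2)+\psi(3) \;=\; \tfrac{5}{2}-2\gamma
$$
(using $k\geq 5$ in the last inequality). Substituting this into the master identity bounds the quantity of interest above by $n[2\log(2\pi)+2\gamma-5/2]-\log(\N(\n)d_\F^2)$, so that non-vanishing reduces to verifying the threshold $\N(\n)d_\F^2>\exp\bigl(n[2\log(2\pi)+2\gamma-5/2]\bigr)$.

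The main obstacle is establishing this inequality uniformly for every totally real field of degree $n\geq 3$ and every level $\n$. Numerically the required bound is mildest at $n=3$, where it amounts to $\N(\n)d_\F^2$ exceeding roughly $1084$; since $\N(\n)\geq 1$, it suffices that $d_\F^2$ alone clears this, which follows from the known lower bound $d_\F\geq 49$ for totally real cubic fields (attained by $\Q(\zeta_7)^+$, giving $d_\F^2\geq 2401$). For $n\geq 4$ the Odlyzko/Minkowski lower bounds on $d_\F^{1/n}$ grow comfortably faster than required. No transcendence input beyond Lindemann's theorem is needed; the only novelty compared with the classical modular case is the use of $\f|J_\n=c\f$ to simplify the right-hand side of the functional equation.
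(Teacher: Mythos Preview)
Your proposal is correct and follows essentially the same route as the paper: derive the master identity \eqref{HMF2} from the functional equation, apply Lindemann to the difference of two levels, and for non-vanishing bound $\psi(s_0)+\psi(k-s_0)$ from below and then invoke discriminant lower bounds (minimal discriminants for small $n$, Minkowski-type bounds for large $n$). The only cosmetic difference is that you exploit concavity of $\psi$ to obtain the bound $\psi(2)+\psi(3)=\tfrac{5}{2}-2\gamma$, whereas the paper uses bare monotonicity to get $\psi(5/2)+\psi(2)$; your estimate is in fact slightly sharper, which makes the subsequent discriminant verification a touch easier.
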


\begin{rmk}
The non-vanishing result also holds when $n=2$ and $k \geq 8.$
\end{rmk}
\begin{proof}
Let $\f \in E(N,\kk, s_0)$ 
with weight $\kk$ and level $\n$. Using \eqref{lfunHMF} and \eqref{FunHMF}, we get
$$
\N(\n\mathfrak{D}_\F^2)^{s/2}(2\pi)^{-ns}\Gamma(s)^nL(\f,s)
=
ci^{nk}\N(\n\mathfrak{D}_\F^2)^{(k-s)/2}(2\pi)^{-n(k-s)}
\Gamma(k-s)^nL(\f, k-s).
$$
Taking logarithmic derivative, we have
\begin{equation}\label{derHMF}
\frac12 \log{\N(\n\mathfrak{D}_\F^2)}-n\log{2\pi}+n\psi(s)
+ \frac{L'(\f, s)}{L(\f, s)}=-\frac12 \log{\N(\n\mathfrak{D}_\F^2)} 
+ n\log{2\pi} - n\psi(k-s)-\frac{L'(\f, k-s)}{L(\f, k-s)}.
\end{equation}
Putting $s=s_0$, and using the fact that $\N(\mathfrak{D}_\F)=d_\F$,
we get 
\begin{equation}\label{HMF2}
\frac{L'(\f,s_0)}{L(\f,s_0)}+\frac{L'(\f,k-s_0)}{L(\f,k-s_0)}
=
-\log{N}-2\log{d_\F}+2n\log(2\pi)-n\psi(s_0) -n \psi(k-s_0).
\end{equation}
Let $\f_1 \in E(N_1,\kk, s_0)$ and $\f_2 \in E(N_2,\kk, s_0)$ be 
such that their corresponding values are distinct and algebraic. 
Then by \eqref{HMF2}, the difference of their corresponding values, 
$\log{(N_2/N_1)}$, is
also algebraic. This contradicts Lindemann's theorem.
For the second part, the assumptions on $s_0$ and $k$ 
imply $ k/2+a/b \ge 5/2$ and $k/2 - a/b \ge 2$ and therefore,
\begin{equation}\label{negative}
-\log\mathfrak{N}(\n\mathfrak{D}_\F^2)
-n\psi\left(\frac{k}{2}+\frac{a}{b}\right)-n\psi\left(\frac{k}{2}-\frac{a}{b}\right)
\leq -2\log{d_\F}-n\psi\left(\frac{5}{2}\right)-n\psi(2). 
\end{equation}
Using Minkowski's bound, we have
$\log{d_\F}\geq 2n\log{n}-2\log{n!}$. Putting this in the above
equation and using \eqref{HMF2}, we get
$$
\frac{L'(\f,s_0)}{L(\f,s_0)}+\frac{L'(\f,k-s_0)}{L(\f,k-s_0)}
\leq 
-2n\log{n}+2\log{n!}+2n\log{2\pi}
-n\psi\left(\frac{5}{2}\right)-n\psi(2)
$$
which is negative for $n \geq 5$. 
For $n=3$ (resp. $4$), following Tanabe \cite{NT}, we use the minimal discriminant 
of extensions of degree $3$ (resp. $4$) and show that 
R.H.S. of \eqref{negative} is negative to conclude the result.
\end{proof}

\begin{thm}
Let $k\geq 1$ and $s_0 \in \C$.
Let $J$ be a non-empty, finite set of positive 
integers $N$ such that $(N,d_\F)=1$ and 
 $E(N,\kk, s_0)$ is non-empty. Further assume that $J$ has property $\bf{A}$. Then 
$$
\left\{ \frac{L'(\f_N,s_0)}{L(\f_N,s_0)} +
\frac{L'(\f_N,k-s_0)}{L(\f_N,k-s_0)}
 \Big| \f_N\in E(N,\kk,s_0), N \in J \right\}
 $$ 
spans a vector space over $\overline{\Q}$ of dimension at least $|J|-1$.
\end{thm}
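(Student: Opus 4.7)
The plan is to follow the same template used in the proofs of \thmref{main2}, \thmref{main3}, and the analogous result for automorphic $L$-functions on $\GL(d)$. The essential observation is that equation \eqref{HMF2}, which was derived in the proof of \thmref{noname} directly from the functional equation \eqref{FunHMF}, already expresses each element of the target set in the very convenient form
$$
\frac{L'(\f_N,s_0)}{L(\f_N,s_0)} + \frac{L'(\f_N,k-s_0)}{L(\f_N,k-s_0)} \;=\; -\log N + w,
$$
where
$$
w \;=\; -2\log d_\F + 2n\log(2\pi) - n\psi(s_0) - n\psi(k-s_0)
$$
is a single complex number independent of $N$ and of the particular choice of $\f_N \in E(N,\kk,s_0)$. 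In particular, for each fixed $N \in J$ every Hecke eigenform in $E(N,\kk,s_0)$ contributes the same value, so the target set is literally $\{-\log N + w : N \in J\}$ and has cardinality $|J|$.

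Next I would invoke the hypothesis that $J$ satisfies property $\bf A$. By unique factorization in $\Z$, this gives that $\{\log N : N \in J\}$ is linearly independent over $\Q$, and Baker's theorem then upgrades this to linear independence over $\overline{\Q}$. Consequently $\{-\log N : N \in J\}$ is a linearly independent set of size $|J|$ in $\C$ viewed as a vector space over $\overline{\Q}$.

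Finally I would apply \lemref{LI} with $\K = \overline{\Q}$, with the independent set $\{v_N\}_{N \in J} = \{-\log N\}_{N \in J}$, with the vector $w$ above, and with the scalars $r_N = -1$ for every $N$. The lemma then yields that the set $\{v_N - r_N w\}_{N \in J} = \{-\log N + w\}_{N \in J}$ spans a subspace of $\C$ of dimension at least $|J| - 1$ over $\overline{\Q}$, which by the first step is exactly the span of the set in the statement.

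No step is really an obstacle: the functional equation has already been processed once in \thmref{noname}, Baker's theorem and \lemref{LI} are assembled in Section 1, and property $\bf A$ gives the $\Q$-linear independence of the logarithms for free. The proof is a direct transcription of the pattern already established for modular forms of integer and half-integer weight, with the single input that distinguishes the Hilbert case, namely the extra factor $d_\F$ and the $n$-fold product of gamma factors, being absorbed harmlessly into the common term $w$.
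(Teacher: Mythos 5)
Your proposal is correct and follows exactly the paper's own argument: property $\bf A$ gives $\Q$-linear independence of $\{\log N : N \in J\}$, Baker's theorem upgrades this to $\overline{\Q}$-linear independence, and \lemref{LI} applied via \eqref{HMF2} with the constant $w$ independent of $N$ yields the dimension bound. The paper's proof is just a terser version of the same steps.
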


\begin{proof}
Since the set $J$ has property $\bf{A}$, the set $\{\log{N}|N \in J\}$
is linearly independent over $\Q$. By Baker's theorem this set in
linearly independent over $\overline{\Q}$. The result now follows 
using \eqref{HMF2} and \lemref{LI}.
\end{proof}

\begin{thm}
Let $k \geq 2$. Then there exists an integer $q_0$ such that for any
integer $q \ge 7$ coprime to $q_0$ and 
$\f \in \cap_{\substack{1\leq a \leq q\\ (a,q)=1}}E(N, \kk, a/q)$, 
the $\Q$-vector space spanned by the set
$$
\left\{ \frac{L'(\f,a/q)}{L(\f,a/q)} 
+\frac{L'(\f,k-a/q)}{L(\f,k-a/q)}
~\Big|~
 1 \leq a  <q/2, (a,q)=1 \right\}
$$
has dimension at least 
$\frac{\phi(q)}{2}-2$. 
\end{thm}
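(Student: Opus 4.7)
The plan is to mirror the proof of \thmref{coprime} almost verbatim, with equation \eqref{HMF2} playing the role of \eqref{fracab}. First I would specialize \eqref{HMF2} at $s_0 = a/q$ for each $1 \le a < q/2$ with $(a, q) = 1$ and apply the digamma recurrence (equation \eqref{Gamma} with $m=0$) to rewrite $\psi(k - a/q) = \psi(1 - a/q) + r(k, a, q)$, where
\[
r(k, a, q) := \sum_{j=1}^{k-1} \frac{1}{j - a/q} \;\in\; \Q.
\]
This puts each element of the target set in the form
\[
C \;-\; n\bigl[\psi(a/q) + \psi(1 - a/q)\bigr] \;-\; n\, r(k, a, q),
\]
where $C := 2n\log(2\pi) - \log N - 2\log d_\F$ is a single constant, independent of $a$.

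Next I would invoke \rmkref{dimension}: pick the integer $q_0$ provided there, so that for every $q \ge 7$ coprime to $q_0$ the $\phi(q)/2$ vectors $v_a := \psi(a/q) + \psi(1 - a/q)$ (with $1 \le a < q/2$, $(a,q)=1$) are linearly independent over $\Q$. After rescaling by the nonzero rational $-n$, the set to analyze becomes $\{v_a + r(k,a,q) - C/n\}$. Two successive applications of \lemref{LI} then close the argument. Applied to the linearly independent family $\{v_a\}$ with auxiliary vector $w = 1$ and rational scalars $-r(k,a,q)$, the lemma extracts a linearly independent subset of $\{v_a + r(k,a,q)\}$ of cardinality at least $\phi(q)/2 - 1$. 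Applied once more to this new subset with $w = C/n$ and common rational scalar $1$, it extracts a linearly independent subset of $\{v_a + r(k,a,q) - C/n\}$ of cardinality at least $\phi(q)/2 - 2$, which is the claimed bound.

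I do not anticipate a serious obstacle: every input --- the functional equation \eqref{FunHMF}, the recurrence \eqref{Gamma}, the Gun--Murty--Rath dimension statement, and \lemref{LI} --- transfers verbatim from the elliptic modular case. The bookkeeping changes are the overall factor $n = [\F : \Q]$ multiplying the digamma contributions and the appearance of $\log d_\F$ inside $C$; neither alters the $\Q$-dimension count, because $n$ is a nonzero rational and $C$ is a single fixed complex number that the second application of \lemref{LI} absorbs at the cost of exactly one additional dimension. The only point worth flagging is that the two applications of \lemref{LI} must be performed in the order above: the rational shifts $r(k,a,q)$ vary with $a$ and are eliminated by the first application, while the transcendental shift $C/n$ is common to all $a$ and is eliminated by the second.
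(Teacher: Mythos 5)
Your proposal is correct and follows essentially the same route as the paper: specialize \eqref{HMF2} at $s_0=a/q$, use the digamma recurrence to reduce everything to $\psi(a/q)+\psi(1-a/q)$ plus a varying rational shift and a common constant, then combine \rmkref{dimension} with \lemref{LI}. The paper compresses the final step into one sentence, whereas you make explicit the two applications of \lemref{LI} that account for the loss of exactly two dimensions; this is just a more detailed rendering of the same argument.
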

\begin{proof}
Putting $s_0=a/q$ in \eqref{HMF2}, we get
$$
\frac{L'(\f,a/q)}{L(\f,a/q)}+\frac{L'(\f,k-a/q)}{L(\f,k-a/q)}
=
\log{\frac{(2\pi)^{2n}}{Nd_\F^2}}-n\psi(a/q)-n \psi(1-a/q)-nr(k,a,q),
$$
where $\displaystyle r(k,a,q)= \sum_{j=1}^{k-1}\frac{1}{j-a/q} \in \Q$. 
The result now follows by \rmkref{dimension} and \lemref{LI}.
\end{proof}

\subsection{Higher derivatives of L-functions associated to Hilbert Modular Forms}

For any integer $m\geq 1$, taking the $m$-th derivative of \eqref{derHMF}, we get 
\begin{equation}\label{highHMF}
\left(\frac{L'(\f,s)}{L(\f,s)}\right)^{(m)}+ 
(-1)^m \left(\frac{L'(\f,k-s)}{L(\f,k-s)}\right)^{(m)}=
-n(\psi^{(m)}(s)+(-1)^m\psi^{(m)}(k-s)).
\end{equation}
\begin{thm}
Let $M , k\geq 1$ be integers. Consider the set 
$$
\mathcal{L}_{2,M}=\left\{\frac{L^{(m)}(\f,k/2)}{L(\f,k/2)}
~\Big|~
N \geq 1, \f \in E(N,\kk, k/2), 1 \leq m \leq M\right\}.
$$
Then we have 
$$
\tr_{\Q}(\mathcal{L}_{2,M}) 
~\geq~  
\tr_{\Q}\left(\left\{\psi^{(2m)}(1) ~\Big| ~1 \leq m \leq \left[\frac{M-1}{2}\right]
\right\}\right).
$$
\end{thm}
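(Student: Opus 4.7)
The plan is to mirror the proof of \thmref{thm11}, replacing the use of \eqref{highMF} by its Hilbert analogue \eqref{highHMF}. First I would specialise $s=k/2$ in \eqref{highHMF}: for odd $m$ both sides collapse to $0=0$ and yield no information, while for even $m=2m'$ the two terms on each side coincide, so one obtains
\begin{equation*}
\left(\frac{L'(\f,s)}{L(\f,s)}\right)^{(2m')}_{|s=k/2}
= -n\,\psi^{(2m')}\!\left(\frac{k}{2}\right).
\end{equation*}

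Next, I would reduce $\psi^{(2m')}(k/2)$ to a non-zero rational multiple of $\psi^{(2m')}(1)$ up to a rational summand, using the recurrence \eqref{Gamma} and, when needed, the duplication relation \eqref{psi12}. Concretely, when $k$ is even, \eqref{Gamma} applied with $z=1$ and $t=k/2-1$ gives $\psi^{(2m')}(k/2)=\psi^{(2m')}(1)+q_{k,m'}$ with $q_{k,m'}\in\Q$. When $k$ is odd, \eqref{Gamma} applied with $z=1/2$ reduces $\psi^{(2m')}(k/2)$ to $\psi^{(2m')}(1/2)$ modulo a rational, and \eqref{psi12} converts this to $(2^{2m'+1}-1)\psi^{(2m')}(1)$. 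Either way, $\psi^{(2m')}(k/2)$ and $\psi^{(2m')}(1)$ generate the same extension of $\Q$.

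The third step is to change basis from the ratios $L^{(m)}/L$ to the derivatives of $L'/L$. The same induction used in the proof of \thmref{thm11} applies verbatim here and shows that each $L^{(j)}(\f,s)/L(\f,s)$ is a $\Z$-polynomial in $(L'(\f,s)/L(\f,s))^{(i)}$ for $0\le i\le j-1$, and conversely each $(L'(\f,s)/L(\f,s))^{(j)}$ is a $\Z$-polynomial in $L^{(i)}(\f,s)/L(\f,s)$ for $1\le i\le j+1$. Setting
\begin{equation*}
\tilde{\mathcal{L}}_{2,M}=\left\{\left(\frac{L'(\f,s)}{L(\f,s)}\right)^{(m)}_{|s=k/2}
~\Big|~ 0\le m\le M-1,\ \f\in E(N,\kk,k/2),\ N\ge 1\right\},
\end{equation*}
one obtains $\tr_{\Q}(\mathcal{L}_{2,M})=\tr_{\Q}(\tilde{\mathcal{L}}_{2,M})$.

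Combining the three steps, for every $m'$ with $1\le m'\le [(M-1)/2]$ the element $-n\,\psi^{(2m')}(k/2)$ lies in $\tilde{\mathcal{L}}_{2,M}$, and by the second step each such $\psi^{(2m')}(1)$ lies in the field $\Q(\tilde{\mathcal{L}}_{2,M})$. This immediately gives the advertised lower bound on transcendence degree. I do not anticipate any serious obstacle: the argument is structurally identical to the integer-weight case, and the only point requiring minor attention is the bifurcation between $k$ even and $k$ odd in the reduction of $\psi^{(2m')}(k/2)$, which \eqref{psi12} handles cleanly in both parities.
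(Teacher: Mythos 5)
Your proposal is correct and follows essentially the same route as the paper: specialise \eqref{highHMF} at $s=k/2$ to extract $\psi^{(2m)}(k/2)$ (up to the nonzero rational factor $-n$), pass between $L^{(m)}/L$ and the derivatives of $L'/L$ via the $\Z$-polynomial relations to get $\tr_{\Q}(\mathcal{L}_{2,M})=\tr_{\Q}(\tilde{\mathcal{L}}_{2,M})$, and reduce $\psi^{(2m)}(k/2)$ to $\psi^{(2m)}(1)$ using \eqref{Gamma} and \eqref{psi12}. Your explicit parity split for $k$ even versus odd just spells out what the paper compresses into ``using \eqref{Gamma} and \eqref{psi12}''.
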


\begin{proof} 
Since $\f \in E(N,\kk, k/2)$, for $s=k/2$, using \eqref{highHMF}, we get  
$$
\left(\frac{L'(\f, s)}{L(\f, s)}\right)^{(2m)}|_{s=k/2} 
=
-\psi^{(2m)}\left(\frac{k}{2}\right).
$$
Let 
$$
\tilde{\mathcal{L}}_{2,M}
=\left\{\left(\frac{L'(\f, s)}{L(\f, s)}\right)^{(m)}|_{s=k/2} 
~\middle\vert~ 
N \geq 1, \f \in E(N,\kk, k/2), 0 \leq m \leq M-1 \right\}.
$$
Using induction, we see that for any  $j \in \Z_{\ge 1}$,  
$\displaystyle\frac{L^{(j)}(\f, s)}{L(\f, s)}$ can be expressed as a 
polynomial in $\displaystyle\frac{L'(\f, s)}{L(\f, s)}$, 
$\displaystyle\Big(\frac{L'(\f, s)}{L(\f, s)}\Big)^{(1)}, \ldots ,$ 
$\displaystyle\Big(\frac{L'(\f, s)}{L(\f, s)}\Big)^{(j-1)}$ 
with coefficients in $\Z$ and conversely 
for any ${j \in \bN}$, 
$\displaystyle\Big(\frac{L'(\f, s)}{L(\f, s)}\Big)^{(j)}$
can be expressed as a polynomial in
$\displaystyle\frac{L'(\f, s)}{L(\f, s)}, 
\frac{L^{(2)}(\f, s)}{L(\f, s)}, \ldots, 
\frac{L^{(j+1)}(\f, s)}{L(\f, s)}$
with coefficients in $\Z$. This implies 
$\tr_{\Q}(\mathcal{L}_{2,M})= \tr_{\Q}(\tilde{\mathcal{L}}_{2,M})$. 
Using this observation along with \eqref{Gamma} and \eqref{psi12}
we get the result.
\end{proof}

\begin{thm}
For $s_0 \in \Q$, let
$$
\mathcal{S}(s_0)=\{\f ~|~ \f \in E(N,\kk, s_0) {\rm{~for~  integers~}} N, k \geq 1\}.
$$
Then for any $m \geq 1$, if at least one element of the set
$$
\left\{\left(\left(\frac{L'(\f, s)}{L(\f, s)}\right)^{(m)} 
+ 
(-1)^m \left(\frac{L'(\f, k-s)}{L(\f, k-s)}\right)^{(m)}\right)|_{s=s_0}
~\Big|~ \f \in \mathcal{S}(s_0) \right\}
$$
is algebraic, then all the elements of this set are algebraic.
\end{thm}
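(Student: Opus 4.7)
The plan is to exploit the fact that after differentiating $m \geq 1$ times the key quantity depends only on $k$, $n$, $s_0$, and $m$, with no contribution from the level $N$ or the discriminant $d_\F$. Concretely, I would begin from \eqref{highHMF}, which after setting $s=s_0$ gives, for each $\f \in E(N,\kk,s_0)$ of weight $\kk=(k,\ldots,k)$,
\begin{equation*}
\left(\!\left(\frac{L'(\f,s)}{L(\f,s)}\right)^{\!(m)}\!\!
+(-1)^m\left(\frac{L'(\f,k-s)}{L(\f,k-s)}\right)^{\!(m)}\right)\!\bigg|_{s=s_0}
=-n\bigl(\psi^{(m)}(s_0)+(-1)^m\psi^{(m)}(k-s_0)\bigr).
\end{equation*}
The point is that the terms $\log N$ and $\log d_\F$ from \eqref{derHMF} drop out after one differentiation, so the right-hand side is determined by $n$, $k$, $m$, and $s_0$ alone. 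In particular, elements of the target set corresponding to forms of the same weight $k$ (even with different levels $N$) coincide.

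Next, I would compare two elements coming from $\f_1\in E(N_1,\kk_1,s_0)$ and $\f_2\in E(N_2,\kk_2,s_0)$ with integer weights $k_1,k_2\geq 1$; note that throughout Section~4 the degree $n$ of the fixed totally real field $\F$ is a constant of the setup, so it is the same for both. Subtracting the two expressions, the $-n\psi^{(m)}(s_0)$ pieces cancel and the difference reduces to
\begin{equation*}
-n(-1)^m\bigl(\psi^{(m)}(k_1-s_0)-\psi^{(m)}(k_2-s_0)\bigr).
\end{equation*}
Assuming without loss of generality $k_1\geq k_2$ and applying the recurrence \eqref{Gamma} with $t=k_1-k_2\in\Z_{\geq 0}$ and $z=k_2-s_0$, this difference becomes
\begin{equation*}
-n\,m!\sum_{j=0}^{k_1-k_2-1}\frac{1}{(k_2-s_0+j)^{m+1}},
\end{equation*}
which lies in $\Q$ because $s_0\in\Q$ and $k_1,k_2\in\Z$.

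Finally, I would conclude: since the difference of any two elements of the displayed set is a rational number, the set is a coset of $\Q$ in $\C$. Therefore the algebraicity of one element forces algebraicity of all of them, giving the statement. I expect no serious obstacle here; the only mild point worth verifying is that the polygamma recurrence applies, which uses crucially that the Hilbert weights in this section are integers so that $k_1-k_2\in\Z_{\geq 0}$ (in contrast to Section~3, where the half-integer weights would force a separate argument).
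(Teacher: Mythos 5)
Your proof is correct and follows essentially the same route as the paper: both reduce the claim, via \eqref{highHMF} evaluated at $s=s_0$, to the observation that the difference of the values attached to $\f_1$ and $\f_2$ equals $\pm n\bigl(\psi^{(m)}(k_1-s_0)-\psi^{(m)}(k_2-s_0)\bigr)$, which is rational. The paper merely asserts this rationality, whereas you justify it explicitly with the polygamma recurrence \eqref{Gamma}; that is welcome added detail, not a different method.
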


\begin{proof}
Let $\f_1, \f_2\in \mathcal{S}(s_0)$. By \eqref{highHMF}, we know that 
the result is true when $\kk_1=\kk_2$. For $\kk_1\neq \kk_2$, again 
using \eqref{highHMF}, we note that the difference of their corresponding values is 
$$
(-1)^mn(\psi^{(m)}(k_1-s_0)-\psi^{(m)}(k_2-s_0))
$$
which is a rational number. 
\end{proof}

\section{\large{ Siegel modular forms}}

For an integer $g \geq 2$, let ${\rm Sp}_g(\Z)$
denote the full Siegel modular group of genus $g$ and
$\Lambda_g^+$ the set of positive definite half-integral
matrices of size $g$. For an even integer $k$, 
let $f$ be a Siegel cusp form of weight $k$ with respect 
to ${\rm Sp}_g(\Z)$ with Fourier coefficients $a_f(T)$, 
where $T$ is a symmetric positive definite 
half-integral matrix of size $g$. 
The Koecher-Maass series attached to $f$ is defined by
$$
D_f(s)= \sum_{T\in \Lambda_g^+/{\rm GL}_g(\Z)}
\frac{a(T)}{\epsilon(T)}(\det T)^{-s}, \qquad
\Re(s) \gg 1 
$$
where the sum is over elements of $\Lambda_g^+$ 
modulo the right action of ${\rm GL} _g(\Z)$ 
on $\Lambda_g^+$ given by  $T \mapsto U^tTU$
and  $\epsilon(T)=\# \{U \in {\rm GL}_g(\Z) \vert U^tTU=T\}$.
Here $U^t$ denotes transpose of $U$.
The completed Koecher-Maass series is defined by
\begin{equation}\label{compSMF}
D_f^*(s)=(2\pi)^{-gs}\prod_{\nu=0}^{g-1}\Gamma
\left(s-\frac{\nu}{2}\right)D_f(s)
\end{equation} 
and has a holomorphic continuation to $\C$. 
It satisfies the functional equation
\begin{equation}\label{funSMF}
D_f^*(k-s)=(-1)^{\frac{gk}{2}}D_f^*(s).
\end{equation}
For more details, see \cite{DK, KS, HM}. In this section, we shall always assume
that $k>2(g+1)$.
Let $E(g,k,s_0)$ be the set of all Siegel Hecke eigen cusp form
$f$ of weight $k$ with respect to ${\rm Sp}_g(\Z)$ such that
$D_f(s_0)\neq 0$.
We know from the works of \cite{DK,WK2}) that for 
$k \gg 1$ and for certain points $s_0$ in the critical strip, $E(g,k, s_0)$ is non-empty.
In this set-up, we have the following theorem.

\begin{thm}
Let $k>2(g+1)$ be a fixed integer and $s_0 \in \C$. 
For $f\in E(g,k,s_0)$, we have 
\begin{equation}\label{SMF}
\frac{D'_f(s_0)}{D_f(s_0)}+\frac{D'_f(k-s_0)}{D_f(k-s_0)}
=2g\log{2\pi}-\sum_{\nu=0}^{g-1}
\psi\left(s_0-\frac{\nu}{2}\right)-\sum_{\nu=0}^{g-1}
\psi\left(k-s_0-\frac{\nu}{2}\right).
\end{equation}
Furthermore, if $k\geq 2(g+7)$, for any  $s_0= k/2+a/b \in \Q$ 
with $ 0\leq a/b <(g+1)/2$, 
we have
$$
\frac{D'_f(s_0)}{D_f(s_0)}+\frac{D'_f(k-s_0)}{D_f(k-s_0)}\neq 0.
$$
\end{thm}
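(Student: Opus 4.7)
The plan is to mirror the approach used for the earlier classes of $L$-functions in this paper. For the identity \eqref{SMF}, combine \eqref{compSMF} and \eqref{funSMF} to obtain
\[
(2\pi)^{-gs}\prod_{\nu=0}^{g-1}\Gamma\!\left(s-\tfrac{\nu}{2}\right)D_f(s) = (-1)^{gk/2}(2\pi)^{-g(k-s)}\prod_{\nu=0}^{g-1}\Gamma\!\left(k-s-\tfrac{\nu}{2}\right)D_f(k-s).
\]
Taking logarithmic derivatives with respect to $s$ (the constant sign $(-1)^{gk/2}$ drops out) produces
\[
-g\log 2\pi + \sum_{\nu=0}^{g-1}\psi\!\left(s-\tfrac{\nu}{2}\right)+\frac{D_f'(s)}{D_f(s)} = g\log 2\pi - \sum_{\nu=0}^{g-1}\psi\!\left(k-s-\tfrac{\nu}{2}\right) - \frac{D_f'(k-s)}{D_f(k-s)}.
\]
Setting $s=s_0$ and rearranging yields \eqref{SMF}; the functional equation together with the fact that the Gamma factors never vanish guarantees $D_f(k-s_0)\neq 0$, so that both logarithmic quotients are well-defined.

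For the non-vanishing part, under the hypotheses $k\ge 2(g+7)$ and $s_0=k/2+a/b$ with $0\le a/b<(g+1)/2$, I would show that each of the $2g$ arguments $s_0-\nu/2$ and $k-s_0-\nu/2$ for $0\le\nu\le g-1$ strictly exceeds $7$. Indeed, $s_0-\nu/2\ge k/2-(g-1)/2\ge (g+15)/2\ge 8$, and $k-s_0-\nu/2 > k/2-g \ge 7$. Since $\psi$ is strictly increasing on $(0,\infty)$, monotonicity gives
\[
\sum_{\nu=0}^{g-1}\left[\psi\!\left(s_0-\tfrac{\nu}{2}\right)+\psi\!\left(k-s_0-\tfrac{\nu}{2}\right)\right] > 2g\,\psi(7).
\]
A direct computation using $\psi(7) = -\gamma + \sum_{j=1}^{6} 1/j \approx 1.873$ and $\log 2\pi \approx 1.838$ yields $\psi(7) > \log 2\pi$. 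Substituting into \eqref{SMF}, the right-hand side is bounded above by $2g(\log 2\pi - \psi(7)) < 0$, which is the desired conclusion.

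The main routine step is to pin down the lower bounds $(k-g+1)/2$ and $k/2-g$ on the $\psi$-arguments using the precise range of $a/b$, and then to verify the numerical inequality $\psi(7)>\log 2\pi$; the cutoff $k\ge 2(g+7)$ is chosen precisely so that the smaller family of arguments sits above $7$. Notably, no transcendence-theoretic input (such as Lindemann's or Baker's theorem) is required here — the identity combined with the monotonicity of the digamma function suffices.
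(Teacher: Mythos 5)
Your proposal is correct and follows essentially the same route as the paper: take the logarithmic derivative of the functional equation $D_f^*(k-s)=(-1)^{gk/2}D_f^*(s)$ to obtain the identity, then for the non-vanishing use $k/2+a/b\ge g+7$ and $k/2-a/b>(g-1)/2+7$ together with the monotonicity of $\psi$ to bound the right-hand side above by $2g\log 2\pi-2g\psi(7)<0$. Your explicit numerical check $\psi(7)\approx 1.873>\log 2\pi\approx 1.838$ merely makes the paper's final inequality transparent.
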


\begin{proof}
Using \eqref{compSMF} and \eqref{funSMF}, we get
$$
(2\pi)^{-g(k-s)}\prod_{\nu=0}^{g-1}\Gamma
\left(k-s-\frac{\nu}{2}\right)D_f(k-s)=(-1)^{\frac{gk}{2}}
(2\pi)^{-gs}\prod_{\nu=0}^{g-1}\Gamma
\left(s-\frac{\nu}{2}\right)D_f(s).
$$
Taking logarithmic derivative, we get
\begin{equation}\label{derSMF}
g\log{2\pi}-\sum_{\nu=0}^{g-1}\psi\left(k-s-\frac{\nu}{2}\right)
-\frac{D'_f(k-s)}{D_f(k-s)}=-g\log{2\pi}+\sum_{\nu=0}^{g-1}
\psi\left(s-\frac{\nu}{2}\right)+\frac{D'_f(s)}{D_f(s)}.
\end{equation}
Putting $s=s_0$, we get the first part of the result.
For $s_0$ and $k$ as in the theorem, we have $k/2+a/b \ge g + 7$ 
and $k/2-a/b \ge (g-1)/2+ 7$. Therefore, trivially bounding the 
right hand side of \eqref{SMF}, we get 
$$
\frac{D'_f(s_0)}{D_f(s_0)}+\frac{D'_f(k-s_0)}{D_f(k-s_0)}
<2g\log{2\pi}-2g\psi(7)<0.
$$
\end{proof}

\begin{thm}
Let $g \geq 2$ be even. Then there exists an integer $q_0$ such that
for any odd integer $q \ge 7$ coprime to $q_0$ and 
$f \in \cap_{\substack{1\leq a \leq q\\ (a,q)=1}}E(g, k, a/q)$, the $\Q$-vector 
space spanned by the set
$$
\left\{ \frac{D'_f(a/q)}{D_f(a/q)}+
\frac{D'_f(k-a/q)}{D_f(k-a/q)}
~\Big|~
 1 \leq a  <q/2, (a,q)=1 \right\}
$$
has dimension at least 
$\frac{\phi(q)}{2}-2$. 
\end{thm}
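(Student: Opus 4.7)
The plan is to substitute $s_0=a/q$ in \eqref{SMF} and reduce the resulting sum of digamma values to an expression of the shape $g[\psi(2a/q)+\psi(1-2a/q)]$ modulo a single $a$-independent constant and an $a$-dependent rational term. The dimension bound then follows from \rmkref{dimension} together with a double application of \lemref{LI}, in the spirit of the proof of \thmref{coprime}.

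Putting $s_0=a/q$ in \eqref{SMF} gives
$$\frac{D'_f(a/q)}{D_f(a/q)}+\frac{D'_f(k-a/q)}{D_f(k-a/q)}=2g\log(2\pi)-\sum_{\nu=0}^{g-1}\bigl[\psi(a/q-\nu/2)+\psi(k-a/q-\nu/2)\bigr].$$
Since $g$ is even, the index $\nu$ splits by parity into $g/2$ even and $g/2$ odd values. For even $\nu=2m$ with $0\le m\le g/2-1$, the recurrence relation for the digamma function gives $\psi(a/q-m)=\psi(a/q)+\rho^{(1)}_m$ and, combined with $\psi(k-a/q)=\psi(1-a/q)+\sum_{j=1}^{k-1}(j-a/q)^{-1}$, also $\psi(k-a/q-m)=\psi(1-a/q)+\rho^{(2)}_m$, with $\rho^{(i)}_m\in\Q$. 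For odd $\nu=2m+1$, the same recurrence yields $\psi(a/q-m-1/2)=\psi(a/q+1/2)+\rho^{(3)}_m$ and $\psi(k-a/q-m-1/2)=\psi(1/2-a/q)+\rho^{(4)}_m$ with $\rho^{(i)}_m\in\Q$; the hypothesis $k>2(g+1)$ ensures every shift used is by a non-negative integer. Summing the $g$ terms and then applying the duplication formula $\psi(z)+\psi(z+1/2)=2\psi(2z)-2\log 2$ with $z=a/q$ and with $z=1/2-a/q$, the full sum collapses to $g[\psi(2a/q)+\psi(1-2a/q)]-2g\log 2+r(k,a,q,g)$ for some $r(k,a,q,g)\in\Q$. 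Therefore,
$$\frac{D'_f(a/q)}{D_f(a/q)}+\frac{D'_f(k-a/q)}{D_f(k-a/q)}=g\log(16\pi^2)-g[\psi(2a/q)+\psi(1-2a/q)]-r(k,a,q,g).$$

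Now, since $q$ is odd, $(2a,q)=1$ if and only if $(a,q)=1$; as $a$ ranges over $\{1\le a<q/2:(a,q)=1\}$, the residues $2a$ exhaust the $\phi(q)/2$ even integers in $\{1,\ldots,q-1\}$ coprime to $q$, picking precisely one representative from each pair $\{b,q-b\}$. Hence by \rmkref{dimension} the set $\{\psi(2a/q)+\psi(1-2a/q):1\le a<q/2,\,(a,q)=1\}$ is $\Q$-linearly independent of cardinality $\phi(q)/2$. A first application of \lemref{LI} with $w=1$ and the rational weights $r(k,a,q,g)/g$ absorbs the $a$-dependent rational terms, yielding a subset of dimension at least $\phi(q)/2-1$; a second application of \lemref{LI} with $w=g\log(16\pi^2)$ absorbs the $a$-independent constant, giving the claimed lower bound $\phi(q)/2-2$.

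The principal obstacle is the digamma bookkeeping in the second paragraph: one must carefully carry out the recurrence shifts for both arguments $a/q-\nu/2$ and $k-a/q-\nu/2$ and for both parities of $\nu$, and then verify that the duplication formula correctly pairs $\psi(a/q)$ with $\psi(a/q+1/2)$ and $\psi(1-a/q)$ with $\psi(1/2-a/q)$, so that only the combination $\psi(2a/q)+\psi(1-2a/q)$ survives. Once this identity is in place, the dimension count is essentially a verbatim repetition of the arguments used earlier in the paper for modular and Hilbert modular forms.
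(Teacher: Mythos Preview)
Your argument is correct and follows essentially the same route as the paper: substitute $s_0=a/q$ in \eqref{SMF}, use the recurrence and duplication formulas to reduce the digamma sum to $g[\psi(2a/q)+\psi(1-2a/q)]$ modulo a rational term and an $a$-independent constant, then invoke \rmkref{dimension} and \lemref{LI}. The only cosmetic differences are that the paper normalises to $\psi(a/q-1/2)$ rather than $\psi(a/q+1/2)$ before duplicating (the two differ by a rational), rewrites the result in terms of $b\in\{1,\dots,(q-1)/2\}$ with $b\equiv\pm 2a\pmod q$ rather than $2a/q$ directly, and leaves the double use of \lemref{LI} implicit; your constant $g\log(16\pi^2)$ is in fact the correct one, as the paper's $2g\log\pi$ carries a harmless sign slip in the $\log 2$ term.
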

\begin{proof}
Putting $s_0=a/q$ in \eqref{SMF}, we get
\begin{eqnarray*}
\frac{D'_f(a/q)}{D_f(a/q)}+\frac{D'_f(k-a/q)}{D_f(k-a/q)}
&=& 2 g \log{2\pi} - \sum_{\nu=0}^{g-1}
\psi\left(\frac{a}{q}-\frac{\nu}{2}\right) - \sum_{\nu=0}^{g-1}
\psi\left(k-\frac{a}{q}-\frac{\nu}{2}\right)\\
&=&2 g \log{2\pi}-\frac{g}{2}\left(\psi\left(\frac{a}{q}\right)
+\psi\left(\frac{a}{q}-\frac12 \right)\right)\\
&&-\frac{g}{2}\left( \psi\left(1-\frac{a}{q}\right) +
\psi\left(\frac12-\frac{a}{q} \right) \right) -r(g,k,a,q),
\end{eqnarray*}
where $r(g, k, a, q) \in \Q$. For every $1 \leq a <q/2$ with $(a,q)=1$, there exists a
unique $1 \leq b <q/2$ with $(b,q)=1$ such that
\begin{eqnarray*}
\frac{D'_f(a/q)}{D_f(a/q)}+\frac{D'_f(k-a/q)}{D_f(k-a/q)}
&=& 2 g \log{2\pi}-g\psi(b/q)-g\psi(1-b/q)-2g\log{2}-r^*(g,k,a,q)\\
&=& 2 g \log{\pi}-g\psi(b/q)-g\psi(1-b/q)-r^*(g,k,a,q)
\end{eqnarray*}
for some $r^*(g, k, a,q) \in \Q$.
The result now follows from \rmkref{dimension} and \lemref{LI}.
\end{proof}

\subsection{Higher derivatives of Koecher-Maass series associated to Siegel Modular Forms} 

For any integer $m\geq 1$, taking the $m$-th derivative 
of \eqref{derSMF}, we get 
\begin{equation}\label{highSMF}
\left(\frac{D'_f(s)}{D_f(s)}\right)^{(m)}
+(-1)^m\left(\frac{D'_f(k-s)}{D_f(k-s)}\right)^{(m)}
=-\sum_{\nu=0}^{g-1}\psi^{(m)}\left(s-\frac{\nu}{2}\right)
+(-1)^{m+1}\sum_{\nu=0}^{g-1}\psi^{(m)}\left(k-s-\frac{\nu}{2}\right).
\end{equation}
\begin{thm}
Let $M \geq 1$. Consider the set 
$$
\mathcal{L}_{M}=\left\{\frac{D^{(m)}_f(k/2)}{D_f(k/2)} ~\middle 
\vert~ g\geq 1, f \in E(g,k,k/2), 1 \leq m \leq M\right\}.
$$
Then we have 
$$
\tr_{\Q}(\mathcal{L}_{M})
~\geq~  
\tr_{\Q}\left(\left\{\psi^{(2m)}(1) ~\Big| 
~1 \leq m \leq \left[\frac{M-1}{2}\right]\right\}\right).
$$
\end{thm}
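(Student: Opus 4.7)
The plan is to mirror the approach used in the analogous theorems for integer/half--integer weight modular forms (Theorem~\ref{thm11} and Remark~\ref{rmk}) and for Hilbert modular forms, with the sum over local gamma factors at $\nu = 0,\ldots,g-1$ now playing the role of the single gamma factor or the factor of $n$. The starting point is equation \eqref{highSMF}, evaluated at $s = k/2$ with even order $m=2m'$, $1\le m' \le [(M-1)/2]$. Since $(-1)^{2m'}=1$, both sides of \eqref{highSMF} symmetrize and one obtains
\[
\left(\frac{D'_f(s)}{D_f(s)}\right)^{(2m')}\Bigg|_{s=k/2} \;=\; -\sum_{\nu=0}^{g-1}\psi^{(2m')}\!\left(\frac{k-\nu}{2}\right).
\]

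Next I would reduce every $\psi^{(2m')}((k-\nu)/2)$ to a rational combination of $\psi^{(2m')}(1)$ and a rational number. Because $k$ is even, the parity of $k-\nu$ matches that of $\nu$. Applying the recurrence \eqref{Gamma} iteratively brings the argument down to $1$ when $\nu$ is even, and down to $1/2$ when $\nu$ is odd, each time introducing only a rational error. The special case \eqref{psi12} then replaces $\psi^{(2m')}(1/2)$ by $(2^{2m'+1}-1)\psi^{(2m')}(1)$. Summing over $\nu$ yields
\[
\left(\frac{D'_f(s)}{D_f(s)}\right)^{(2m')}\Bigg|_{s=k/2} \;=\; -A_{g,m'}\,\psi^{(2m')}(1) \;-\; R_{g,k,m'},
\]
where $R_{g,k,m'}\in\Q$ and $A_{g,m'} = \lceil g/2\rceil + \lfloor g/2\rfloor(2^{2m'+1}-1)$ is a strictly positive rational for every $g\ge 1$ and every $m'\ge 1$. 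This last non-vanishing is the only point requiring verification and is immediate since the first summand is $\ge 1$ and the second is non-negative.

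The final ingredient is the polynomial transfer trick already used in the previous proofs: a routine induction shows that for each $j \in \Z_{\ge 1}$, $D_f^{(j)}/D_f$ is an integer-coefficient polynomial in $D'_f/D_f,(D'_f/D_f)',\ldots,(D'_f/D_f)^{(j-1)}$, and conversely. Setting
\[
\tilde{\mathcal{L}}_M \;=\; \left\{\,\Big(\frac{D'_f(s)}{D_f(s)}\Big)^{(m)}\Big|_{s=k/2} \;\middle\vert\; g\ge 1,\ f\in E(g,k,k/2),\ 0\le m\le M-1\,\right\},
\]
this polynomial equivalence gives $\tr_{\Q}(\mathcal{L}_M) = \tr_{\Q}(\tilde{\mathcal{L}}_M)$. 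Combined with the previous step, $\psi^{(2m')}(1)$ lies in $\Q(\tilde{\mathcal{L}}_M)$ for each $1\le m'\le [(M-1)/2]$, and the claimed inequality on transcendence degrees follows at once.

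I do not foresee any genuine obstacle here: every analytic input (functional equation, polygamma recurrence, duplication special value, $L/L'$ polynomial conversion) is already in place in the paper, and the Siegel setting only changes the specific shape of the linear combination of $\psi^{(2m')}$ values on the right-hand side. The bookkeeping of the coefficient $A_{g,m'}$ is the single place where the Siegel case diverges from the Hilbert case, and it has been settled in the second step above.
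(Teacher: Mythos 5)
Your proposal is correct and follows essentially the same route as the paper: evaluate \eqref{highSMF} at $s=k/2$ for even derivative orders, reduce each $\psi^{(2m')}\bigl(\frac{k-\nu}{2}\bigr)$ to $\psi^{(2m')}(1)$ via the recurrence \eqref{Gamma} and the special value \eqref{psi12}, and transfer transcendence degree through the polynomial relation between $D_f^{(j)}/D_f$ and the derivatives of $D_f'/D_f$. The only difference is that you make explicit the positive coefficient $A_{g,m'}$ and its non-vanishing, a bookkeeping step the paper leaves implicit in the phrase ``using this observation along with \eqref{Gamma} and \eqref{psi12}.''
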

\begin{proof}
Since $f \in E(g, k, k/2)$, for $s=k/2$, using \eqref{highSMF}, 
we get 
$$
\left(\frac{D'_f(s)}{D_f(s)}\right)^{(2m)}|_{s=k/2}=
-\sum_{\nu=0}^{g-1}\psi^{(2m)}\left(\frac{k}{2}-\frac{\nu}{2}\right).
$$
Let 
$$
\tilde{\mathcal{L}}_{M}
=\left\{\left(\frac{D'_f(s)}{D_f(s)}\right)^{(m)} ~\middle \vert~
g\geq 1, f \in E(g,k,k/2), 0 \leq m \leq M-1\right\}.
$$
Using induction, we observe that for any $j \in \Z_{\ge 1}$,  
$\displaystyle\frac{D^{(j)}_f(s)}{D_f(s)}$ can be expressed as a 
polynomial in $\displaystyle\frac{D'_f(s)}{D_f(s)}$, 
$\displaystyle\Big(\frac{D'_f(s)}{D_f(s)}\Big)^{(1)}, \ldots ,$ 
$\displaystyle\Big(\frac{D'_f(s)}{D_f(s)}\Big)^{(j-1)}$ 
with coefficients in $\Z$ and conversely 
for any ${j \in \bN}$, 
$\displaystyle\Big(\frac{D'_f(s)}{D_f(s)}\Big)^{(j)}$
can be expressed as a polynomial in
$\displaystyle\frac{D'_f(s)}{D_f(s)}, 
\frac{D^{(j)}_f(s)}{D_f(s)}, \ldots, 
\frac{D^{(j+1)}_f(s)}{D_f(s)}$
with coefficients in $\Z$.
This implies 
$\tr_{\Q}(\mathcal{L}_{M})= \tr_{\Q}(\tilde{\mathcal{L}}_{M})$. 
Using this observation along with \eqref{Gamma} and \eqref{psi12},  
we get the result.
\end{proof}

\begin{thm}
For $s_0\in \Q$, let
$$
\mathcal{S}(s_0)=\{f ~|~ f \in E(g,k,s_0) 
{\rm{~for~  integers~}} g, k \geq 1\}.
$$
Then for any $m \geq 1$, if at least one element of the set
$$
\left\{\left(\left(\frac{D'_f(s)}{D_f(s)}\right)^{(m)}
+(-1)^m\left(\frac{D'_f(k-s)}{D_f(k-s)}\right)^{(m)}\right)|_{s=s_0}
~\Big|~ f \in \mathcal{S}(s_0) \right\}
$$
is algebraic, then all the elements of this set are algebraic.
\end{thm}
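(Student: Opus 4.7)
Following the template of the analogous theorems for integer-weight and Hilbert modular forms in the preceding sections, I would begin by observing that the identity \eqref{highSMF} shows the quantity in question depends only on the parameters $(g, k, s_0, m)$, not on the particular eigenform $f$. Writing
$$
V(g, k) := -\sum_{\nu=0}^{g-1}\Bigl[\psi^{(m)}(s_0 - \nu/2) + (-1)^m \psi^{(m)}(k - s_0 - \nu/2)\Bigr],
$$
the set in question equals $\{V(g, k) : E(g, k, s_0) \neq \emptyset\}$, so the dichotomy ``one algebraic implies all algebraic'' reduces to showing that $V(g_1, k_1) - V(g_2, k_2)$ is algebraic for any two admissible pairs $(g_i, k_i)$.

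For pairs with the same genus $g_1 = g_2 = g$, the $\psi^{(m)}(s_0 - \nu/2)$ contributions cancel pairwise, leaving
$$
V(g, k_1) - V(g, k_2) = (-1)^{m+1}\sum_{\nu=0}^{g-1}\Bigl[\psi^{(m)}(k_1 - s_0 - \nu/2) - \psi^{(m)}(k_2 - s_0 - \nu/2)\Bigr].
$$
Since $k_1 - k_2 \in \Z$, the recurrence relation \eqref{Gamma} rewrites each bracket as a finite $\Q$-linear combination of terms of the form $(q - s_0)^{-(m+1)}$ with $q \in \Q$, which is rational because $s_0 \in \Q$. This parallels exactly the fixed-$n$ argument in the Hilbert-modular case.

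For pairs with differing genera $g_1 \neq g_2$, extra uncanceled terms of the form $\psi^{(m)}(s_0 - \nu/2) + (-1)^m \psi^{(m)}(k - s_0 - \nu/2)$ arise for $\nu$ in the nonoverlapping range. I would handle these by combining the recurrence \eqref{Gamma} (to align arguments modulo integer shifts) with the reflection principle \eqref{Reflection}, which expresses $\psi^{(m)}(1 - z) + (-1)^{m+1}\psi^{(m)}(z)$ as $(-1)^m \pi(\cot \pi z)^{(m)}$, a quantity whose value at rational $z$ is an algebraic multiple of $\pi^{m+1}$. The main obstacle will lie precisely here: unlike the fixed-genus case, where rationality of the difference drops out immediately from \eqref{Gamma}, the residual combinations in the varying-genus case do not obviously collapse to algebraic numbers, and a careful manipulation (or a tacit restriction to fixed $g$, in analogy with the fixed degree $n$ in the Hilbert case) may be required to close the argument.
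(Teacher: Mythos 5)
Your fixed-genus argument is precisely the paper's proof: for two forms with the same $g$, the case $k_1=k_2$ is immediate from \eqref{highSMF}, and for $k_1\neq k_2$ the difference of the two values is $(-1)^{m}\sum_{\nu=0}^{g-1}\bigl(\psi^{(m)}(k_1-s_0-\nu/2)-\psi^{(m)}(k_2-s_0-\nu/2)\bigr)$ up to sign, which is rational by the recurrence \eqref{Gamma} because $s_0\in\Q$ and $k_1-k_2\in\Z$. The obstacle you flag in the varying-genus case is genuine, and you should know that the paper does not resolve it either: its proof splits only on $k_1=k_2$ versus $k_1\neq k_2$ and writes the difference as a single sum over $\nu=0,\ldots,g-1$, i.e.\ it tacitly assumes $g_1=g_2$ even though $\mathcal{S}(s_0)$ as defined lets $g$ range over all integers $\ge 1$. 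Your reflection-principle computation in fact shows why the varying-genus case cannot be patched by these tools: for $\nu$ in the non-overlapping range the surviving bracket $\psi^{(m)}(s_0-\nu/2)+(-1)^m\psi^{(m)}(k-s_0-\nu/2)$ reduces via \eqref{Gamma} and \eqref{Reflection} to $2\psi^{(m)}(s_0-\nu/2)+\pi\,\frac{d^m}{dz^m}(\cot\pi z)\big|_{z=s_0-\nu/2}$ plus a rational, and the leftover $\psi^{(m)}$ at a rational point is essentially a Hurwitz zeta value (e.g.\ $\psi^{(2)}(1)=-2\zeta(3)$), not algebraic in general. So your proposal coincides with the paper's argument exactly where that argument works, and correctly diagnoses that the theorem as literally stated (with $g$ varying) is not covered by it; the honest fix is to fix the genus $g$ in the definition of $\mathcal{S}(s_0)$, in analogy with the fixed degree $n$ in the Hilbert modular case.
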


\begin{proof}
Let $f_1, f_2\in \mathcal{S}(s_0)$. By \eqref{highSMF}, we know that 
the result is true when $k_1=k_2$. For $k_1\neq k_2$, again 
using \eqref{highSMF}, we note that the difference of their corresponding values is 
$$
(-1)^{m}\sum_{\nu=0}^{g-1}\left(\psi^{(m)}\left(k_1-s_0-\frac{\nu}{2}\right)
-\psi^{(m)}\left(k_2-s_0-\frac{\nu}{2}\right)\right).
$$
which is a rational number. 
\end{proof}

\section{Acknowledgements}
The author would like to thank Professor Sanoli Gun for suggesting the problem and for her guidance throughout the paper and Professor Purusottam Rath for helpful discussions in improving the paper. The author would like to thank DAE Number Theory Plan Project.
The author would also like to thank the referee for careful reading of the paper and kind suggestions.

\medskip

\end{document}